\documentclass[12pt]{article}
\usepackage{abstract}
\usepackage{booktabs}
\usepackage{caption}
\usepackage{mathrsfs}
\usepackage{amsmath}
\usepackage{amsfonts,amsthm,amssymb,mathrsfs,bbding}
\usepackage{float}
\usepackage{graphics,epstopdf}
\usepackage{graphics,multicol}
\usepackage{graphicx}
\usepackage{color}
\usepackage{tikz}
\usepackage{enumerate}
\usepackage{caption}
\usepackage{rotating}
\usepackage{lscape}
\usepackage{longtable}
\allowdisplaybreaks[4]
\usepackage{tabularx}
\usepackage[colorlinks=true,anchorcolor=blue,filecolor=blue,linkcolor=blue,urlcolor=blue,citecolor=blue]{hyperref}
\usepackage{extarrows}
\usepackage{cite}
\usepackage{latexsym,bm}
\usepackage{mathtools}
\evensidemargin=\oddsidemargin\topmargin=-1.5cm

\newtheorem{thm}{Theorem}[section]

\newtheorem{lem}{Lemma}[section]
\newtheorem{cor}{Corollary}[section]

\newtheorem{conj}{Conjecture}[section]
\newtheorem{claim}{Claim}[section]

\addtocounter{section}{0}

\begin{document}
\title{\bf A spectral threshold for triangle counting\footnote{Supported by the National
Natural Science Foundation of China (No.\! 12571369).}}

\author{{\bf Yuhan Zhang},\quad
{\bf Mingqing Zhai}\thanks{Corresponding author. E-mail address: mqzhai@njust.edu.cn (M. Zhai).}
\\
{\footnotesize School of Mathematics and Statistics, Nanjing University of Science and Technology}\\
{\footnotesize Nanjing 210094, Jiangsu, China}\\
}
\date{}

\maketitle

\begin{abstract}
The 1970 spectral extension of Mantel's theorem, proved by Nosal,
states that every graph with $m$ edges and spectral radius
$\rho_1>\sqrt{m}$ contains at least one triangle.
Its quantitative refinement by Ning and Zhai later established that any graph $G$ with $m$ edges and spectral radius
$\rho_1\geq\sqrt{m}$ contains at least $\lfloor\frac{\sqrt{m}-1}{2}\rfloor$ triangles, unless $G$ is a complete bipartite graph.

In this paper, we further investigate the minimum number of triangles guaranteed under the strengthened spectral condition
$\rho_1\geq\sqrt{m}+c$,
where $c$ is a positive constant.
We prove that for any constant $c\in (0,\frac{1}{2}]$ and all sufficiently large $m$,
if $s=s(m)$ is a real-valued function satisfying $\lim_{m\to\infty} \frac{s}{m}=c$,
then every $m$-edge graph $G$ with spectral radius $\rho_1$ satisfying
$\rho_1^2\geq m-1+\frac{2s}{\rho_1-1}$ contains at least $s$ triangles. 
Moreover, we characterize the extremal graph achieving the minimal number of triangles.
In particular, when $s=\frac{m-1}2$, our result settles a conjecture proposed by Li, Feng, and Peng.

\end{abstract}

\begin{flushleft}
\noindent\textbf{Keywords:} Spectral Mantel's theorem; spectral radius; triangle; counting

\textbf{AMS subject classifications:} 05C50; 05C35
\end{flushleft}

\section{Introduction}\label{sec-1}

~~~~Mantel's theorem is one of the cornerstone results in extremal graph theory (see \cite{Mantel}).
This classical theorem precisely characterizes the maximum number of edges in an $n$-vertex triangle-free simple graph,
demonstrating that the complete bipartite graph \(K_{\lfloor n/2 \rfloor, \lceil n/2 \rceil}\)
is the unique extremal graph.

Rademacher (unpublished paper, see Erd\H{o}s ~\cite{E-1,E62}) showed that every \(n\)-vertex graph with \(e(T_{n,2}) + 1\) edges contains at least \(\lfloor n/2 \rfloor\) triangles, 
and Erd\H{o}s~\cite{E-3,E-4} later generalized this result: for a suitable range of \(k\),
every graph with \(\lfloor n^2/4 \rfloor + k\) edges contains at least \(k\lfloor n/2 \rfloor\) triangles.

Over the past century, this fundamental result has been extended and generalized in numerous directions (see \cite{bollobas2004extremal}).
A particularly powerful framework in these generalizations is the spectral approach,
which centers on the spectral radius of a graph, i.e., the largest eigenvalue of its adjacency matrix.
The spectral radius naturally connects the linear algebraic properties of a graph to its core structural invariants (see \cite{cvetkovic1995spectra}).
Since the adjacency matrix of an undirected graph is real symmetric,
it admits a complete set of real eigenvalues and an orthonormal eigenbasis,
laying a solid theoretical foundation for spectral graph theory.

A landmark spectral extension of Mantel's theorem was established by Nosal in 1970 (see \cite{N70}).
Prior to this work, nearly all extensions of Mantel's theorem relied on combinatorial arguments and vertex-degree conditions,
while the direct connection between the spectral radius and the existence of triangles remained largely unexamined.
Nosal's pioneering result filled this gap, proving that any graph with \(m\) edges and spectral radius \(\rho_1 > \sqrt{m}\) must contain at least one triangle.
This theorem was the first to establish a sharp, edge-count-based spectral condition for the presence of triangles,
opening a new direction in spectral extremal graph theory and inspiring decades of follow-up research on spectral conditions for subgraph existence and counting.
Building on this, Bollob\'as and Nikiforov~\cite{BN07} further refined this line of inquiry by deriving the inequality
\[
t(G) \geq \frac{\rho_1(G)\big(\rho_1^2(G) - m\big)}{3},
\]
which explicitly links the number \(t(G)\) of triangles to the spectral radius \(\rho_1(G)\) and the number \(m\) of edges.

Building on Bollob\'as and Nikiforov's foundational result, Ning and Zhai~\cite{M01} later provided a critical quantitative refinement.
They proved that any graph \(G\) with \(m\) edges and spectral radius \(\rho_1(G)\geq \sqrt{m}\) contains at least \(\lfloor \frac{\sqrt{m} - 1}{2} \rfloor\) triangles,
with the sole exception of complete bipartite graphs (which are triangle-free).
Recently,
Li, Feng, and Peng~\cite{M02} proposed the following conjecture, which further pushes the boundaries of spectral triangle counting:

\begin{conj}[Li, Feng and Peng~\cite{M02}]\label{conj-1}
Let \(G\) be a graph with \(m\) edges, where \(m\) is sufficiently large. If \(G\) satisfies
\[
\rho_1(G) \geq \frac{1 + \sqrt{4m - 3}}{2},
\]
then \(t(G) \geq \frac{m - 1}{2}\), with equality if and only if \(G = K_2 \nabla \frac{m - 1}{2}K_1\).
\end{conj}

Motivated by these breakthroughs and the growing interest in sharp spectral conditions for subgraph counting,
we further investigate the minimum number of triangles guaranteed by a strengthened spectral condition.
Specifically, we consider the condition \(\rho_1 \geq \sqrt{m} + c\) for a fixed positive constant \(c\),
which pushes the spectral radius strictly above the classical Nosal's condition.
This strengthened condition is designed to reveal how the guaranteed minimum number of triangles grows
as the spectral radius exceeds \(\sqrt{m}\) by a fixed constant---a question left fully open by Nosal's result and Ning and Zhai's bound.

For integer $s$,
let \( S^{+}_{m,s} \) denote the graph constructed by embedding an edge into the \(\frac{1}{s}(m - 1)\)-vertex partition of \( K_{s,\frac{m-1}s} \).
In particular, when \( s =\frac{m-1}2\),
\( S^{+}_{m,s} \) is clearly isomorphic to $K_2 \nabla \frac{m - 1}{2}K_1$.
We prove the following theorem.

\begin{thm}\label{thm-1}
Let $m$ be sufficiently large, and let $s=s(m)$ be any real-valued function
with $\lim\limits_{m\to\infty}\frac{s}{m}=c$, where $c\in\left(0,\frac{1}{2}\right]$.
If $G$ is an $m$-edge graph such that
$$\rho_1^2(G)\geq m-1+\frac{2s}{\rho_1(G)-1},$$
then $t(G)\geq s$. Equality holds if and only if $G\cong S_{m,s}^{+}$.
\end{thm}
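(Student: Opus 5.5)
We argue by contradiction: let $G$ be an $m$-edge graph with $\rho:=\rho_1(G)$ satisfying $\rho^2\ge m-1+\frac{2s}{\rho-1}$, and assume $t(G)\le s$. The target is to show $t(G)=s$ and $G\cong S^+_{m,s}$. We delete isolated vertices and let $x$ be the unit Perron vector. First we sanity-check $S^+_{m,s}$. Write $q=\frac{m-1}{s}$, let $\{u,v\}$ be the embedded edge, and note that by symmetry $x_u=x_v$. The eigen-equations reduce to a $2\times2$ quotient problem. Its characteristic equation should rearrange exactly to $\rho^2=m-1+\frac{2s}{\rho-1}$, while $t=s$. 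So the hypothesis is tight. Since $s/m\to c>0$, we also get $\rho\ge \sqrt m+\Theta(1)$ with $\rho^2-m=\Theta(1)$, namely $\rho^2-m\to 4c-1$ roughly. In particular $\rho=\sqrt m+O(1)$.

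The key identity comes from counting closed walks of length $3$ weighted by $x$. We have $\rho^2=\sum_{v}d_v x_v^2+\sum_v\sum_{u\neq w\in N(v)}x_ux_w$. Alternatively, $\rho^2 x_v = d_v x_v+\sum_{w} |N(v)\cap N(w)|\,x_w$. Summing $\rho^2x_v-\rho x_v$ over $v$ in a weighted way, we use
\[
\rho^3-\rho^2=\sum_{uv\in E}2x_ux_v(\text{stuff}) ,
\]
more precisely $\rho^3=\sum_v\rho^2 x_v^2\rho$ and $\sum_{v}(A^3)_{vv}=6t$. The cleanest route is the standard inequality $\rho^2(\rho-1)\le (m-1)(\rho-1)+2\,t_x$, where $t_x$ is a Perron-weighted triangle count. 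This would be derived from $\rho^2 x_v\le \sum_{u\sim v}\sum_{w\sim u}x_w$ split according to whether $w$ is adjacent to $v$, combined with $\sum_{w\notin N[v]}x_w\le$ (non-neighbour contribution) and the bound $\sum_v x_v\le \sqrt{\,\cdot\,}$ used in Nikiforov/Ning--Zhai type arguments. We aim to prove that $\rho^3-\rho^2-(m-1)\rho+(m-1)\le 2t(G)$ holds, with equality analysis. Combined with the hypothesis, this forces $t(G)\ge s$, and equality forces every inequality in the chain to be tight.

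The main obstacle is that the crude walk inequality loses the factor coming from $\max_v x_v$ and does not by itself give coefficient $2$ on $t(G)$. The plan is therefore structural, in the style of Ning--Zhai and Li--Feng--Peng. Using $t\le s=\Theta(m)$ and $\rho\ge\sqrt m$, we first show that $G$ is nearly complete bipartite: by Bollob\'as--Nikiforov, $t(G)\ge\rho(\rho^2-m)/3$, so $\rho^2-m=O(1)$. Stability for Nosal's theorem then gives a vertex set $S$ of size $\Theta(1)$, whose vertices carry large Perron weight, together with a near-bipartition. The hypothesis $s/m\to c\le \frac12$ then pins down the structure. Triangles must be concentrated through a few heavy edges inside one part. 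Counting shows that the number of such edges must be exactly one: two or more inside edges would cost too many triangles relative to their spectral gain, since each extra edge $uv$ raises $\rho^2$ by about $2x_ux_v\rho^2/(\rho-1)$ but adds $|N(u)\cap N(v)|$ triangles. A local-switching and Rayleigh-quotient comparison then shows that the bipartite host must be exactly $K_{s,(m-1)/s}$ with both endpoints of the edge complete to the other side, and that any deviation strictly lowers $\rho$ below the threshold or strictly increases $t$. This last comparison, which handles the edge-moving operations while keeping $m$ fixed, is where we expect the bulk of the technical work. It concludes $t(G)\ge s$, with equality only for $S^+_{m,s}$.
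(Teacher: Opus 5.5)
\textbf{There is a genuine gap: neither of your two routes reaches the conclusion.}

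\textbf{The first route.} It rests on the inequality $\rho^3-\rho^2-(m-1)\rho+(m-1)\le 2t(G)$. The left side equals $(\rho^2-m+1)(\rho-1)$, so the theorem's hypothesis is exactly ``left side $\ge 2s$''. Your inequality is therefore just the theorem in contrapositive form. As an unconditional statement it is false: every complete bipartite graph with $m\ge 2$ edges has $\rho=\sqrt m$ and $t=0$, yet its left side is $\sqrt m-1>0$. No walk-counting argument can produce it without extra input, as you partly concede.

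Your quantitative bookkeeping is also off by a factor $\sqrt m$. The hypothesis gives $\rho^2-m\ge\frac{2s}{\rho-1}-1$, which is of order $\sqrt m$ since $\rho\le\sqrt{2m}$. Bollob\'as--Nikiforov with $t\le s$ gives $\rho^2-m\le 3s/\rho=O(\sqrt m)$. So $\rho^2-m=\Theta(\sqrt m)$, not $O(1)$ and not tending to $4c-1$. It is $\rho-\sqrt m$ that is of order $1$ (about $c$ for $S^+_{m,s}$).

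\textbf{The structural route.} It invokes an unspecified ``stability for Nosal's theorem''. It then defers everything that actually identifies the extremal graph to an unstated switching/Rayleigh-quotient comparison: exactly one edge inside a part, both its ends complete to the other side, no stray vertices, host exactly $K_{s,(m-1)/s}$. That deferred step is the whole theorem. An approximate (edit-distance) stability statement cannot by itself supply it, because the decisive comparisons are second-order terms of size $\sqrt m$ sitting next to $2s\approx 2cm$.

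\textbf{What the paper uses instead.} The missing ingredient is a device that is exact at that scale.
\begin{itemize}
\item Via the Ning--Zhai identity $t=\frac16\sum_{i\ge2}(\rho_1+\rho_i)\rho_i^2+\frac13\rho_1(\rho_1^2-m)$, the paper turns the two hypotheses into $\sum_{i\ge2}(\rho_1+\rho_i)\rho_i^2\le 2s+(2-4c+o(1))\sqrt m$.
\item From this it deduces $\sum_{i=2}^{n-1}\rho_i^2=o(\sqrt m)$.
\item By Cauchy interlacing, an induced subgraph $H$ with $\sum_{i=2}^{|H|-1}\rho_i^2(H)\ge 2$ (or $\ge\frac32$ when $c>\frac14$) would push that sum above the upper bound. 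For example, it would reach $2s+(2-4c^2-o(1))\sqrt m$ in the first case.
\item This yields a finite list of forbidden induced subgraphs: $2K_2$, $K_4$, $M_1$--$M_{12}$, $K_1\nabla C_4$ and the dart.
\item Examined around a vertex $u^*$ of maximum Perron entry, these force $G^*[N(u^*)]$ to be a star. They also force $W=V\setminus N[u^*]$ to be independent, with each $w\in W$ adjacent exactly to the leaves.
\item Choosing $G^*$ of maximum $\rho$ among graphs satisfying the hypothesis with $t\le s$, together with the Wu--Xiao--Hong switching lemma, removes pendant vertices.
\item Finally, the equation $\rho^2=m-1+\frac{2r}{\rho-1}$ for $S^+_{m,r}$ forces $r=s$.
\end{itemize}
Without some mechanism of this kind that produces exact local structure, your outline does not prove the statement.
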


Our proof relies on classical inequalities for adjacency eigenvalues, subgraph counting techniques, 
and constructions of induced subgraphs.
Note that
\[
\rho_1^2 \geq m - \frac{1}{2} + \sqrt{m - \frac{3}{4}} \iff \rho_1^2 \geq m - 1 + \frac{2s}{\rho_1 - 1}, \text{where } s = \frac{m - 1}{2}.
\]

As an important special case, setting \(s = \frac{m - 1}{2}\), we obtain the following corollary:
\begin{cor}\label{cor-1}
Let \(G\) be a graph with \(m\) edges, where \(m\) is sufficiently large. If
\[
\rho_1(G)\geq\frac{1+\sqrt{4m-3}}{2},
\]
then \(t(G) \geq \frac{m - 1}{2}\), with equality if and only if \(G\cong S^{+}_{m,\frac{m - 1}{2}}\).
\end{cor}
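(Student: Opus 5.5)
The corollary should follow directly from Theorem~\ref{thm-1} with the choice $s=s(m)=\frac{m-1}{2}$. The plan is to check that this choice meets the theorem's hypotheses, that the spectral conditions coincide, and that the equality cases match.

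First, the hypothesis on $s$. We have $\lim_{m\to\infty}\frac{s}{m}=\frac12$, and $c=\frac12$ lies in $(0,\frac12]$. So Theorem~\ref{thm-1} applies for all sufficiently large $m$.

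Second, the equivalence of the spectral conditions noted after the theorem. Write $\rho_1=\rho_1(G)$. Either condition forces $\rho_1>1$ once $m$ is large: the corollary's condition gives $\rho_1\ge \frac{1+\sqrt{4m-3}}{2}>1$ directly. For the theorem's condition, observe that if $\rho_1<1$ then $\frac{2s}{\rho_1-1}<0$, and with $\rho_1\le 1$ the inequality $\rho_1^2\ge m-1+\frac{m-1}{\rho_1-1}$ fails for large $m$. Hence we may multiply by $\rho_1-1>0$. With $2s=m-1$,
\[
\rho_1^2\ge m-1+\frac{m-1}{\rho_1-1}
\iff (\rho_1^2-(m-1))(\rho_1-1)\ge m-1
\iff \rho_1^3-\rho_1^2-(m-1)\rho_1\ge 0 .
\]
Since $\rho_1>0$, this is equivalent to $\rho_1^2-\rho_1-(m-1)\ge0$. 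In turn, this is equivalent to $\rho_1\ge\frac{1+\sqrt{4m-3}}{2}$, the larger root of the quadratic. So the hypothesis of the corollary is exactly the hypothesis of Theorem~\ref{thm-1} for this $s$.

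Third, the conclusion. Theorem~\ref{thm-1} gives $t(G)\ge s=\frac{m-1}{2}$, with equality if and only if $G\cong S^{+}_{m,\frac{m-1}{2}}$. When $m$ is even, $s$ is not an integer, so equality cannot occur. When $m$ is odd, $S^{+}_{m,\frac{m-1}{2}}$ is obtained from $K_{\frac{m-1}{2},2}$ by adding the edge inside the $2$-vertex part. This graph is $K_2\nabla\frac{m-1}{2}K_1$, which has exactly $m$ edges and $\frac{m-1}{2}$ triangles, as claimed.

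There is no real obstacle in this argument. The only care needed is the sign of $\rho_1-1$ when clearing the denominator, which is handled by the largeness of $m$ as above.
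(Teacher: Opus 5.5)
Your proposal is correct and follows the paper's own route: specialize Theorem~\ref{thm-1} to $s=\frac{m-1}{2}$ (so $c=\frac12$), and use that for $\rho_1>1$ the condition $\rho_1^2\ge m-1+\frac{m-1}{\rho_1-1}$ is equivalent to $\rho_1^2-\rho_1-(m-1)\ge 0$, i.e.\ to $\rho_1\ge\frac{1+\sqrt{4m-3}}{2}$. One small slip, which does not affect the argument: for $0\le\rho_1<1$ the right-hand side equals $(m-1)\frac{\rho_1}{\rho_1-1}\le 0$, so the theorem's inequality would hold there rather than fail, but this case is vacuous since every graph with an edge has $\rho_1\ge 1$, and you only need the direction from the corollary's hypothesis (which already forces $\rho_1>1$) to the theorem's.
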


Thus, Corollary \ref{cor-1} confirms Conjecture \ref{conj-1}. Beyond resolving this conjecture,
our result generalizes both Nosal's original theorem and Ning and Zhai's quantitative refinement.


\section{Preliminary}\label{sec-2}

~~~~In this section, we introduce several necessary lemmas.

\begin{lem}\label{lem-2.1}(Ning and Zhai\cite{M01}).
Let $G$ be a graph on $n$ vertices and $m$ edges,
and let $\rho_{1}\geq\rho_{2}\geq\cdots\geq\rho_{n}$ be all adjacency eigenvalues of $G$. Then
\[
t(G)=\frac{1}{6}\sum_{i=2}^{n}(\rho_{1}+\rho_{i})\rho_{i}^{2}+\frac13\rho_{1}(\rho_{1}^{2}-m).
\]
\end{lem}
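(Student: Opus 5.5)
This identity follows from the two classical trace formulas for the adjacency matrix $A$ of $G$; no structural argument is needed. Since $A$ is real symmetric with eigenvalues $\rho_1,\dots,\rho_n$, we have $\operatorname{tr}(A^k)=\sum_{i=1}^n\rho_i^k$ for every $k\geq 1$. The plan is to evaluate $\operatorname{tr}(A^2)$ and $\operatorname{tr}(A^3)$ combinatorially, and then check that the right-hand side of Lemma~\ref{lem-2.1} collapses to $t(G)$.

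\textbf{Step 1 (traces).} The diagonal entry $(A^2)_{vv}$ equals $d(v)$, so $\sum_{i=1}^n\rho_i^2=\sum_v d(v)=2m$. The diagonal entry $(A^3)_{vv}$ counts closed walks of length $3$ starting at $v$. Each such walk traverses a triangle through $v$ in one of two orientations, and each triangle contains three vertices, so
\[
\sum_{i=1}^n\rho_i^3=6t(G).
\]
Separating the $i=1$ term in each sum gives
\[
\sum_{i=2}^n\rho_i^2=2m-\rho_1^2,\qquad \sum_{i=2}^n\rho_i^3=6t(G)-\rho_1^3.
\]

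\textbf{Step 2 (substitution).} Expand the first term of the claimed expression as
\[
\frac16\sum_{i=2}^n(\rho_1+\rho_i)\rho_i^2=\frac16\Big(\rho_1\sum_{i=2}^n\rho_i^2+\sum_{i=2}^n\rho_i^3\Big).
\]
Substituting the identities from Step 1, this equals
\[
\frac16\big(2m\rho_1-\rho_1^3+6t(G)-\rho_1^3\big)=\frac{m\rho_1}{3}-\frac{\rho_1^3}{3}+t(G).
\]
Adding $\frac13\rho_1(\rho_1^2-m)=\frac{\rho_1^3}{3}-\frac{m\rho_1}{3}$, the $\rho_1^3$ terms and the $m\rho_1$ terms both cancel, leaving exactly $t(G)$.

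The only point requiring care is the factor $6$ in $\operatorname{tr}(A^3)=6t(G)$, namely $3$ starting vertices times $2$ orientations for each triangle. After that the argument is routine algebra, so there is no genuine obstacle.
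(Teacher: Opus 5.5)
Your proof is correct and is the standard derivation: it uses $\mathrm{tr}(A^2)=2m$ and $\mathrm{tr}(A^3)=6t(G)$, and I checked that the algebra closes, with the $\rho_1^3$ and $m\rho_1$ terms cancelling. This is how the identity is obtained in the cited work of Ning and Zhai; the present paper states the lemma without proof and simply quotes it.
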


The following result is known as the Cauchy interlacing theorem (see \cite{cvetkovic1995spectra}).

\begin{lem}\label{lem-2.2}
Let $A$ be a symmetric $n\times n$ matrix and $B$ be an $r\times r$ principal submatrix of $A$ for some $r < n$.
If the eigenvalues of $A$ are $\rho_{1}\geq\rho_{2}\geq\ldots\geq\rho_{n}$
and the eigenvalues of $B$ are $\mu_{1}\geq\mu_{2}\geq\ldots\geq\mu_{r}$,
then $\rho_{i}\geq\mu_{i}\geq\rho_{n-r+i}$ holds for all $1\leq i\leq r$.
\end{lem}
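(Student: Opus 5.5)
We will derive Lemma~\ref{lem-2.2} from the Courant--Fischer min-max characterization of the eigenvalues of a real symmetric matrix. For a symmetric $n\times n$ matrix $M$ with eigenvalues $\lambda_1\ge\cdots\ge\lambda_n$, this characterization says
\[
\lambda_i(M)=\max_{\substack{U\le\mathbb{R}^n\\ \dim U=i}}\ \min_{0\ne x\in U}\frac{x^{T}Mx}{x^{T}x}.
\]
First, conjugating $A$ by a permutation matrix changes neither its spectrum nor its symmetry. So we may assume that $B$ is the leading $r\times r$ principal submatrix of $A$, i.e.\ $B$ is indexed by the first $r$ coordinates. Let $\iota:\mathbb{R}^r\to\mathbb{R}^n$ be the embedding that pads a vector with $n-r$ zeros. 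Then $\iota$ is injective and isometric, and
\[
\iota(y)^{T}A\,\iota(y)=y^{T}By\qquad\text{for all } y\in\mathbb{R}^r,
\]
so the Rayleigh quotient of $B$ at $y$ equals the Rayleigh quotient of $A$ at $\iota(y)$.

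\emph{Upper bound $\mu_i\le\rho_i$.} Take an $i$-dimensional subspace $U\le\mathbb{R}^r$ that attains the maximum in the formula for $\mu_i$. Its image $\iota(U)$ is an $i$-dimensional subspace of $\mathbb{R}^n$. Since the Rayleigh quotients agree, the minimum of the Rayleigh quotient of $A$ over $\iota(U)\setminus\{0\}$ equals $\mu_i$. Hence $\rho_i\ge\mu_i$, because $\rho_i$ is the maximum of this quantity over all $i$-dimensional subspaces of $\mathbb{R}^n$.

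\emph{Lower bound $\mu_i\ge\rho_{n-r+i}$.} Apply the upper bound to $-A$ and its principal submatrix $-B$. The $j$-th largest eigenvalue of $-B$ is $-\mu_{r+1-j}$, and the $j$-th largest eigenvalue of $-A$ is $-\rho_{n+1-j}$. The upper bound therefore gives $-\mu_{r+1-j}\le-\rho_{n+1-j}$ for $1\le j\le r$. Substituting $i=r+1-j$ yields $\mu_i\ge\rho_{n-r+i}$ for all $1\le i\le r$.

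The argument has no genuine obstacle once Courant--Fischer is assumed. The only points needing care are the index bookkeeping in the reflection $A\mapsto -A$, and checking that $\iota$ preserves dimension, which holds because it is injective.
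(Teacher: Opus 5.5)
Your proof is correct: it is the standard Courant--Fischer argument, the zero-padding embedding $\iota$ preserves both Rayleigh quotients and dimension, and applying the upper bound to $-A$ and $-B$ with the substitution $i=r+1-j$ (so that $n+1-j=n-r+i$) correctly gives $\mu_i\geq\rho_{n-r+i}$. The paper does not prove this lemma itself; it only quotes the classical Cauchy interlacing theorem from a standard spectral graph theory reference, and your argument is the usual textbook proof of that result.
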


The following lemma provides a useful edge-switching tool for $\rho_1(G)$.

\begin{lem}\label{lem-2.3}(Wu, Xiao, and Hong\cite{M03}).
Let $G$ be a connected graph with vertices $v_i,v_j\in V(G)$ and let $S$ be a nonempty subset of
$N_G(v_j)\setminus N_G(v_i)$. Assume that
\[
G^*=G-\{v_jv\mid v\in S\}+\{v_iv\mid v\in S\},
\]
and $(x_{v_1},\ldots,x_{v_n})^T$ is the Perron vector of $G$.
If $x_{v_i}\geq x_{v_j}$, then $\rho_{1}(G^*)>\rho_{1}(G)$.
\end{lem}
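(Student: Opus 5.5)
The plan is to compare the quadratic forms of the two adjacency matrices at the Perron vector of $G$, using the Rayleigh principle. Let $A=A(G)$, $A^*=A(G^*)$, and let $x=(x_{v_1},\ldots,x_{v_n})^T$ be the unit Perron vector of $G$. Since $G$ is connected, the Perron--Frobenius theorem gives $x_v>0$ for every vertex $v$, and $Ax=\rho_1(G)x$. I will take for granted that $v_i\notin S$, as is implicit in the statement, so that $G^*$ is again a simple graph on the same vertex set.

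\textbf{Step 1 (weak inequality).} The switching deletes the edges $v_jv$ and adds the edges $v_iv$ for $v\in S$. Since $S\subseteq N_G(v_j)\setminus N_G(v_i)$, none of the added edges is already present. Hence
\[
x^TA^*x-x^TAx=2\sum_{v\in S}x_v\,(x_{v_i}-x_{v_j})\geq 0,
\]
because $x_{v_i}\geq x_{v_j}$ and $x_v>0$. By the Rayleigh principle for the symmetric matrix $A^*$,
\[
\rho_1(G^*)\geq x^TA^*x\geq x^TAx=\rho_1(G).
\]

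\textbf{Step 2 (strictness).} This is the only delicate point: if $x_{v_i}=x_{v_j}$, Step 1 yields only $\rho_1(G^*)\ge\rho_1(G)$. Suppose for contradiction that $\rho_1(G^*)=\rho_1(G)=:\rho$. Then the Rayleigh quotient of $A^*$ attains its maximum at the unit vector $x$. Therefore $x$ is an eigenvector of $A^*$ for $\rho$, so $A^*x=\rho x$; this holds even if $G^*$ is disconnected.

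Now read off the coordinate at $v_j$. The neighbourhood of $v_j$ in $G^*$ is $N_G(v_j)\setminus S$, so
\[
(A^*x)_{v_j}=(Ax)_{v_j}-\sum_{v\in S}x_v=\rho x_{v_j}-\sum_{v\in S}x_v<\rho x_{v_j}.
\]
The strict inequality uses that $S\neq\emptyset$ and that every entry of $x$ is positive. This contradicts $A^*x=\rho x$, so $\rho_1(G^*)>\rho_1(G)$.

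The main obstacle is exactly this equality case. The positivity of the Perron vector, which is where connectivity of $G$ is used, together with $S\neq\emptyset$, is what rules it out.
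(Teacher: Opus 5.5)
Your proof is correct. The Rayleigh-quotient comparison gives $\rho_1(G^*)\ge\rho_1(G)$, and in the equality case $x$ would have to be an eigenvector of $A(G^*)$, which fails at the $v_j$-coordinate because $S\neq\emptyset$ and $x>0$.

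The paper does not prove this lemma; it only cites Wu, Xiao and Hong. Your argument is the standard proof of this result. Your explicit assumption $v_i\notin S$ is the right reading of the statement, since otherwise $G^*$ would acquire a loop.
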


\section{Proof of Theorem \ref{thm-1}}\label{sec-3}

~~~~In this section, we present the proof of Theorem \ref{thm-1}.
Recall that $s$ is a real-valued function depending on $m$ and
satisfying $\lim\limits_{m\to\infty}\frac{s}{m}=c$, where $c\in\left(0,\frac{1}{2}\right]$ is a constant.
We now define a graph family as follows:
\[
\mathcal{G}=
\left\{
G \mid \rho_1^2\geq m-1+\frac{2s}{\rho_1-1},\ t(G)\leq s,\ e(G)=m
\right\}.
\]

If $\mathcal{G}$ is empty, then Theorem \ref{thm-1} holds immediately.
Therefore, we may assume that $\mathcal{G}$ is nonempty,
and we only need to check all graphs in $\mathcal{G}$.
Observe that for integer $s$, $S_{m,s}^+$ has exactly $s$ triangles and satisfies $\rho_1^2=m-1+\frac{2s}{\rho_1-1}$.
Let $G^*$ be an extremal graph in $\mathcal{G}$ that achieves the maximum spectral radius.
To complete the proof of Theorem \ref{thm-1},
it suffices to show that $s$ is an integer and $G^*\cong S_{m,s}^+$.

Without loss of generality,
we assume that the graph under consideration has no isolated vertices and $|G^*|=n$.
Denote $\rho_i:=\rho_i(G^*)$, where the eigenvalues are ordered such that
$\rho_1\geq \rho_2\geq\cdots\geq \rho_n$.
Then, we know that $\sum_{i=1}^n\rho_i^2=2m$ (see \cite{cvetkovic1995spectra}).
Let $X=(x_{1},x_{2},\ldots,x_{n})^{T}$ be the Perron vector of $G^*$,
and let $u^*\in V(G^*)$ be a vertex with the maximum Perron coordinate, that is,
$
x_{u^*}=\max_{u\in V(G^*)}x_u.
$

Denote $U=N(u^*)$ and $W=V(G^*)\setminus\bigl(U\cup\{u^*\}\bigr)$. Then, we have
\begin{align}\label{eq1}
\rho_1^2x_{u^*}
&=
d(u^*)x_{u^*}
+\sum_{uv\in E(U)}\!\!(x_u+x_v)
+\sum_{w\in W}\!d_W(w)x_w                                     \nonumber \\
&\leq
\bigl(d(u^*)+2e(U)+e(U,W)\bigr)x_{u^*} \nonumber\\
&=
\bigl(m+e(U)\bigr)x_{u^*}.
\end{align}

Given that $\lim_{m\to\infty}\frac{s}{m}=c$, we obtain the asymptotic relation
$s=\big(c\pm o(1)\big)m$ as $m$ tends to infinity. Substituting the above expression into the inequality for $\mathcal{G}$, we obtain
$
\rho_1^2\geq m-1+\frac{2}{\rho_1-1}\big(c-o(1)\big)m.
$
Therefore, we have
\begin{equation}\label{eq2}
\rho_1>\sqrt{m}.
\end{equation}
Note that $t(G^*)\leq s$. By Lemma \ref{lem-2.1}, we obtain
\begin{equation}\label{eq3}
\sum_{i=2}^{n}(\rho_1+\rho_i)\rho_i^2
+
2\rho_1(\rho_1^2-m)
\leq 6s.
\end{equation}
Moreover,
since $\sum_{i=2}^{n}(\rho_1+\rho_i)\rho_i^2\geq 0$ and $\lim\limits_{m\to\infty}\frac{s}{m}=c\leq\frac12$,
we have
$$
\rho_1(\rho_1^2-m)\leq 3s\leq 3\big(c+o(1)\big)m\leq \frac85m.
$$
Dividing both sides of the above equality by $\rho_{1}$ and using (\ref{eq2}), we derive that
\begin{equation}\label{eq4}
\rho_1^2-m
\leq
\frac{8m}{5\rho_1}
<
\frac{8m}{5\sqrt{m}}
=
\frac85\sqrt{m}.
\end{equation}
Combining (\ref{eq2}) and (\ref{eq4}) gives
$
\rho_1-\sqrt{m}
=
\frac{\rho_1^2-m}{\rho_1+\sqrt{m}}
<
\frac45.
$
It follows that
\begin{equation}\label{eq5}
\rho_1<\sqrt{m}+\frac45.
\end{equation}
Using (\ref{eq5}) and the asymptotic relation $s=\big(c-o(1)\big)m$, we have
\begin{equation}\label{eq6}
\frac{2s}{\rho_1-1}
\geq
\frac{2\big(c-o(1)\big)m}{\sqrt{m}}
=
2\big(c-o(1)\big)\sqrt{m}.
\end{equation}
Since $\rho_1^2\geq m-1+\frac{2s}{\rho_1-1}$, we have
\begin{align}\label{eq7}
\rho_1(\rho_1^2-m)
\geq
\rho_1\big(-1+\frac{2s}{\rho_1-1}\big)
=
2s\big(1+\frac{1}{\rho_1-1}\big)-\rho_1 .
\end{align}
Combining (\ref{eq3}), (\ref{eq5}) and (\ref{eq7}), we obtain
\begin{align}\label{eq8}
\sum_{i=2}^{n}(\rho_1+\rho_i)\rho_i^2
&\leq
2s-\frac{4s}{\rho_1-1}+2\rho_1       \notag
\leq
2s-\frac{4s}{\sqrt{m}}+2\big(\sqrt{m}+\frac45\big) \notag\\
&=
2s+\big(2-4c+o(1)\big)\sqrt{m}.
\end{align}

\begin{claim}\label{clm-3.1}
$\sum_{i=2}^{n-1}\rho_i^2=o(\sqrt{m})$
and $(\rho_1+\rho_n)\rho_n^2$ is increasing with respect to $\rho_n$.
\end{claim}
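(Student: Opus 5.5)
We prove Claim~\ref{clm-3.1} by combining (\ref{eq8}) with the trace identity $\sum_{i=1}^n\rho_i^2=2m$ and the asymptotic bounds (\ref{eq2}), (\ref{eq5}). Each term $(\rho_1+\rho_i)\rho_i^2$ is nonnegative, because $|\rho_i|\le\rho_1$. Consequently, any single term is bounded by the whole sum, which by (\ref{eq8}) is at most $2s+O(\sqrt m)$.

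\textbf{Locating $\rho_n$.} First we show that $\rho_n$ is large and negative. Write $\Sigma:=\sum_{i=2}^{n}\rho_i^2=2m-\rho_1^2$. Since $\rho_1^2=m+O(\sqrt m)$ by (\ref{eq2}) and (\ref{eq4}), we get $\Sigma=m-O(\sqrt m)$. Split the indices into those with $\rho_i\ge 0$ and those with $\rho_i<0$.
\begin{itemize}
\item For $\rho_i\ge0$ we have $(\rho_1+\rho_i)\rho_i^2\ge\rho_1\rho_i^2$. By (\ref{eq8}), $\sum_{\rho_i\ge0,\,i\ge2}\rho_i^2\le (2s+O(\sqrt m))/\rho_1=O(\sqrt m)$.
\item For $\rho_i<0$ with $i\le n-1$, use the interlacing-type fact $\rho_{n-1}\ge -\rho_1$ together with $\sum\rho_i^2=2m$. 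The quantity to control is $\sum_{i=2}^{n-1}\rho_i^2$ directly.
\end{itemize}
The key idea is convexity. For a negative eigenvalue $\rho_i=-x$ with $0<x\le\rho_1$, the term is $f(x)=(\rho_1-x)x^2$. Given a fixed budget $\sum x^2$, the small total $\sum f(x)\le 2s+O(\sqrt m)$ forces the mass of $\sum x^2$ to concentrate on values of $x$ close to $\rho_1$. We will argue that at most one eigenvalue can be close to $-\rho_1$. If $\rho_{n-1}\le-\delta\sqrt m$ as well, then $\rho_n^2+\rho_{n-1}^2\le\Sigma$ yields $|\rho_n|\le\sqrt{m-\delta^2m}$. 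Hence $\rho_1+\rho_n\ge(1-\sqrt{1-\delta^2})\sqrt m$, and the $\rho_n$ term alone contributes order $m^{3/2}$, contradicting (\ref{eq8}). Thus $|\rho_i|=o(\sqrt m)$ for $2\le i\le n-1$.

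\textbf{Bounding the middle eigenvalues.} Since $|\rho_i|=o(\sqrt m)$ for these indices, $\rho_1+\rho_i\ge(1-o(1))\rho_1$. Therefore
\[
\sum_{i=2}^{n-1}\rho_i^2\le\frac{\sum_{i=2}^{n-1}(\rho_1+\rho_i)\rho_i^2}{(1-o(1))\rho_1}.
\]
To obtain $o(\sqrt m)$ rather than merely $O(\sqrt m)$, we must also use the $\rho_n$ term. Set $y=-\rho_n$ and $T=\sum_{i=2}^{n-1}\rho_i^2$, so that $y^2=\Sigma-T$. Inequality (\ref{eq8}) then gives
\[
(\rho_1-y)y^2+(1-o(1))\rho_1 T\le 2s+(2-4c+o(1))\sqrt m.
\]
Since $y\le\sqrt{\Sigma}$ and $\Sigma=2m-\rho_1^2$, we have $\rho_1-y\ge\rho_1-\sqrt{2m-\rho_1^2}$. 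Expanding with $\rho_1^2=m+\epsilon$ gives $\rho_1-\sqrt{2m-\rho_1^2}\approx\epsilon/\sqrt m$. By (\ref{eq7}), $\epsilon\ge 2s/\rho_1-O(1)$, so $(\rho_1-y)y^2\gtrsim \epsilon\sqrt m\approx 2s$, up to error terms of order $\sqrt m$. Substituting back should leave $\rho_1T\le O(\sqrt m)$, that is, $T=O(1)$, which is in particular $o(\sqrt m)$.

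\textbf{Main obstacle.} The hardest step is this second-order bookkeeping. One must show that the $\rho_n$ term already absorbs essentially all of $2s$. This requires a precise expansion of $(\rho_1-y)y^2$ with $y^2=2m-\rho_1^2-T$, together with the lower bound on $\rho_1^2-m$ from (\ref{eq7}). The correction $T$ also decreases $y$, which increases $\rho_1-y$ and works in our favour, and the $O(\sqrt m)$ error terms must be tracked carefully.

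\textbf{Monotonicity.} The second assertion is routine. The derivative of $g(x)=(\rho_1+x)x^2$ is $g'(x)=x(2\rho_1+3x)$, which is positive for $-\rho_1\le x<-\tfrac23\rho_1$. The first part gives $\rho_n^2=\Sigma-o(\sqrt m)=(1-o(1))m$, so $\rho_n\approx-\sqrt m\approx-\rho_1$ lies in this interval. Hence $(\rho_1+\rho_n)\rho_n^2$ is increasing in $\rho_n$ on the relevant range.
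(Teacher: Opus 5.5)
Your proposal is correct, but it is organized differently from the paper's proof.

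\textbf{The paper's route.} The paper never controls individual middle eigenvalues. It proves the single estimate $\rho_1+\rho_n\ge\frac{2(\rho_1^2-m)+T}{2\sqrt m}$, where $T=\sum_{i=2}^{n-1}\rho_i^2$. It then splits into cases by the growth order of $T$: either $T\sim c_1m$, or $T\sim c_2m^{\alpha}$ with $\frac12\le\alpha<1$. In the latter cases it shows that the $\rho_n$-term alone exceeds $2s$ by roughly $\frac T2\sqrt m$. After everything is rewritten as $(2c\pm o(1))m$, this contradicts the upper bound $2s+(2-4c+o(1))\sqrt m$.

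\textbf{Your route.} You first prove $|\rho_i|=o(\sqrt m)$ for $2\le i\le n-1$: two eigenvalues below $-\delta\sqrt m$ would produce a term of order $m^{3/2}$, and the nonnegative ones have $\sum\rho_i^2=O(\sqrt m)$. From this you deduce $T=O(\sqrt m)$. You then let the $\rho_n$-term absorb $2s$ through the cruder bound $\rho_1-y\ge\rho_1-\sqrt\Sigma$, while the middle terms supply the excess $(1-o(1))\rho_1T$.

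\textbf{The step you flagged does close.} By concavity,
$\rho_1-\sqrt\Sigma=\frac{2\epsilon}{\sqrt{m+\epsilon}+\sqrt{m-\epsilon}}\ge\frac{\epsilon}{\sqrt m}$, so $(\rho_1-y)y^2\ge\epsilon\sqrt m-\frac{\epsilon(\epsilon+T)}{\sqrt m}$. Since $\rho_1-1<\sqrt m$, you also have $\epsilon\sqrt m\ge\frac{2s\sqrt m}{\rho_1-1}-\sqrt m>2s-\sqrt m$. With $\epsilon<\frac85\sqrt m$ and $T=O(\sqrt m)$, every loss is $O(\sqrt m)$, so $T=O(1)$.

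\textbf{Comparison.} Your route gains two things.
\begin{enumerate}
\item It does not presuppose that $T/m^{\alpha}$ converges for some $\alpha$. The paper's case split, as written, is not literally exhaustive.
\item You keep $s$ exact rather than passing to $(c\pm o(1))m$, so your errors are $O(\sqrt m)$ instead of $o(m)$. This gives the stronger conclusion $T=O(1)$, which is essentially what the paper only extracts later, with explicit constants, in Claim~\ref{clm-3.2}.
\end{enumerate}
The paper's route is more compact. Its estimate for $\rho_1+\rho_n$ builds the gain from $T$ directly into the $\rho_n$-term, so it needs no control of individual middle eigenvalues. Your monotonicity argument is the same as the paper's.
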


\begin{proof}
Recall that $2m=\sum_{i=1}^n\rho_i^2$ and $\rho_1^2\geq m-1+\frac{2s}{\rho_1-1}$.
Moreover, it is easy to see that $(\rho_1-\rho_n)^2\leq2(\rho_1^2+\rho_n^2)$. Thus,
\begin{align}\label{eq9}
\rho_1+\rho_n
&=
\frac{\rho_1^2-\rho_n^2}{\rho_1-\rho_n}                                      \notag
\geq
\frac{
\rho_1^2-\left(2m-\sum_{i=1}^{n-1}\rho_i^2\right)
}{
\sqrt{2(\rho_1^2+\rho_n^2)}
}                                                                            \notag
\geq
\frac{
2(\rho_1^2-m)+\sum_{i=2}^{n-1}\rho_i^2
}{2\sqrt{m}}                                                                            \notag\\
&\geq
\frac{1}{\sqrt{m}}
\Big(
\frac{2s}{\rho_1-1}
-1
+\frac12\sum_{i=2}^{n-1}\rho_i^2
\Big).
\end{align}

Suppose first that
$
\lim_{m\to\infty}
\frac{1}{m}\sum_{i=2}^{n-1}\rho_i^2
=
c_1\neq 0.
$
From (\ref{eq2}) we know that $\rho_1>\sqrt{m}$.
Consequently,
$\sum_{i=2}^{n-1}\rho_i^2\leq 2m-\rho_1^2\leq m$, which implies $c_1\leq 1$.
Furthermore,
$$
\rho_n^2
=
2m-\rho_1^2-\sum_{i=2}^{n-1}\rho_i^2
\leq
\big(1-c_1+o(1)\big)m
\leq
\big(1-\frac{1}{2}c_1\big)m.
$$
Hence, $\rho_n\geq -\sqrt{\big(1-\frac{1}{2}c_1\big)m}$.
It follows that
$
\rho_1+\rho_i
\geq
\rho_1+\rho_n
\geq
\big(1-\sqrt{1-\frac{1}{2}c_1}\big)\sqrt{m}$ for any $i\in\{2,\ldots,n-1\}$.
Since $\lim_{m\to\infty}
\frac{\sum_{i=2}^{n-1}\rho_i^2}{m}=c_1\leq1$, we have
\begin{align*}
\sum_{i=2}^{n\!-\!1}(\rho_1\!+\!\rho_i)\rho_i^2
\geq
\Big(1\!-\!\sqrt{1\!-\!\frac{1}{2}c_1}\Big)\sqrt{m}
\sum_{i=2}^{n\!-\!1}\rho_i^2
\geq
\Big(c_1\Big(1\!-\!\sqrt{1\!-\!\frac{1}{2}c_1}\Big)\!-\!o(1)\Big)m^{\frac32},
\end{align*}
which contradicts inequality (\ref{eq8}).

We next suppose that
$\lim_{m\to\infty}
\frac{1}{m^\alpha}\sum_{i=2}^{n-1}\rho_i^2=c_2\neq 0$
for some fixed $\alpha$ satisfying $\frac12\leq \alpha<1$.
Based on (\ref{eq5}), we have $\rho_1<\sqrt{m}+\frac45$. 
Since $\alpha<1$, it follows that
$$\rho_n^2=2m-\rho_1^2-\sum_{i=2}^{n-1}\rho_i^2\geq \big(1-o(1)\big)m.$$
Substituting the estimate from (\ref{eq6}) into the expression for $\rho_{1}+\rho_{n}$ in (\ref{eq9}),
we obtain
\begin{align*}
\rho_1+\rho_n
\geq
\frac{1}{\sqrt{m}}
\Big(
2\big(c-o(1)\big)\sqrt{m}
-1+\frac12\big(c_2-o(1)\big)m^\alpha
\Big)\geq\frac{c_2}{4}m^{\alpha-\frac12}.
\end{align*}
It follows that
$
(\rho_1+\rho_n)\rho_n^2
\geq
\frac{c_2}{4}\big(1-o(1)\big)m^{\alpha+\frac12}.
$
We now consider the two admissible cases for $\alpha$ separately:

When $\alpha>\frac12$, the lower bound on $(\rho_1+\rho_n)\rho_n^2$ grows superlinearly in $m$.
This directly contradicts the linear upper bound for the same quantity given in \eqref{eq8}.

When $\alpha = \frac{1}{2}$, using \eqref{eq6}, \eqref{eq9}, and the bound $\rho_n^2\geq \big(1-o(1)\big)m$, we obtain
\begin{align}\label{eq10}
(\rho_1+\rho_n)\rho_n^2
&\geq
\big(2c+\frac{c_2}{2}-o(1)\big)\big(1-o(1)\big)m =\big(2c+\frac{c_2}{2}-o(1)\big)m.
\end{align}
On the other hand,
inequality \eqref{eq8} gives the following upper bound:
$
\sum_{i=2}^{n}(\rho_1+\rho_i)\rho_i^2 \leq 2s + o(m) = \big(2c + o(1)\big)m.
$
This contradicts the lower bound in \eqref{eq10}.

Now, we conclude that
$\sum_{i=2}^{n-1}\rho_i^2=o(\sqrt{m}).$
This implies that $|\rho_i|=o(m^{\frac14})$ for $i\in\{2,\ldots,n-1\}$.
Combining \eqref{eq2} gives $\rho_1+\rho_i\geq \big(1-o(1)\big)\sqrt{m}$ for $2\leq i\leq n-1$.
Moreover, in view of (\ref{eq4}), we know that
$\rho_1^2\leq m+\frac85\sqrt{m}$ and thus
$$\rho_n^2=2m-\rho_1^2-\sum_{i=2}^{n-1}\rho_i^2\geq m-2\sqrt{m}\geq\frac49\rho_1^2.$$
It follows that $\rho_n\leq -\frac23\rho_1<0$.

To investigate the monotonicity of $(\rho_1+\rho_n)\rho_n^2$,
we define $f(\rho_n)=(\rho_1+\rho_n)\rho_n^2$.
Differentiating $f(\rho_n)$ yields:
$$
f'(\rho_n)
=
\rho_n^2+2\rho_n(\rho_1+\rho_n)
=
\rho_n(3\rho_n+2\rho_1)>0.
$$
It follows immediately that $(\rho_1+\rho_n)\rho_n^2$ is increasing with respect to $\rho_n$.
\end{proof}

\begin{claim}\label{clm-3.2}
For all $c\in(0,\frac12]$, the graph $G^*$ contains no induced subgraph $H$ such that
$\sum_{i=2}^{|H|-1}\rho_i^2(H)\geq 2.$
Furthermore, if $c\in(\frac14,\frac12]$, then $G^*$ contains no induced subgraph $H$ such that
$\sum_{i=2}^{|H|-1}\rho_i^2(H)\geq \frac32.$
\end{claim}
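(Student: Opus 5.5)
The plan is to transfer the hypothesis on $H$ to the eigenvalues of $G^*$ with the Cauchy interlacing theorem (Lemma~\ref{lem-2.2}), and then to redo the estimate behind (\ref{eq3})--(\ref{eq8}) keeping every term of order $\sqrt m$. The aim is a contradiction whenever $S:=\sum_{i=2}^{n-1}\rho_i^2$ is at least $2$, or at least $\frac32$ when $c>\frac14$.

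\textbf{Step 1 (interlacing).} Let $H$ be an induced subgraph with $r=|H|$ vertices and eigenvalues $\mu_1\ge\cdots\ge\mu_r$. By Lemma~\ref{lem-2.2}, $\rho_i\ge\mu_i\ge\rho_{n-r+i}$. Let $k$ be the largest index in $\{2,\dots,r-1\}$ with $\mu_k\ge 0$.
\begin{itemize}
\item For $2\le i\le k$ we have $0\le\mu_i\le\rho_i$, so $\mu_i^2\le\rho_i^2$, with index $i\in[2,k]$.
\item For $k<i\le r-1$ we have $0>\mu_i\ge\rho_{n-r+i}$, so $\mu_i^2\le\rho_{n-r+i}^2$, with index $n-r+i\in[n-r+k+1,\,n-1]$.
\end{itemize}
Since $n-r+k+1>k$, the two index sets are disjoint and both lie in $\{2,\dots,n-1\}$. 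Hence
\[
S=\sum_{i=2}^{n-1}\rho_i^2\ \ge\ \sum_{i=2}^{r-1}\mu_i^2(H).
\]
It therefore suffices to show that $S<2$ always, and $S<\frac32$ when $c\in(\frac14,\frac12]$.

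\textbf{Step 2 (sharp second-order estimate).} Write $\rho_1^2=m+\delta$. The hypothesis gives
\[
\delta\ge -1+\frac{2s}{\rho_1-1}.
\]
Combined with (\ref{eq5}), this yields $\delta=2c'\sqrt m+O(1)$, where $c'=s/m$, and $\rho_1=\sqrt m+c'+O(m^{-1/2})$.
\begin{itemize}
\item By Claim~\ref{clm-3.1}, $S=o(\sqrt m)$, so each $|\rho_i|$ with $2\le i\le n-1$ is $o(m^{1/4})$. Hence $\sum_{i=2}^{n-1}(\rho_1+\rho_i)\rho_i^2\ge(1-o(1))\sqrt m\,S$.
\item For the last eigenvalue we have exactly $\rho_n=-\sqrt{m-\delta-S}$. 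By the monotonicity in Claim~\ref{clm-3.1} it is enough to work with this value.
\end{itemize}
I will expand $(\rho_1+\rho_n)\rho_n^2$ with $\rho_1=\sqrt{m+\delta}$ to second order in $\delta/\sqrt m$. The terms $\delta^2/\sqrt m$ are of order $c^2\sqrt m$, so they cannot be dropped; this is where the dependence on $c$ enters. Inserting everything into (\ref{eq3}) gives
\[
3\sqrt m\,\delta+\tfrac32\sqrt m\,S+(\text{explicit }c\text{-dependent terms})\cdot\sqrt m\ \le\ 6s+o(\sqrt m).
\]
I will then use the lower bound on $\delta$, expanded as $\frac{2s}{\rho_1-1}=\frac{2s}{\sqrt m}+2c'(1-c')+o(1)$. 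This cancels the $6s$ and leaves an inequality of the form $\frac32S\le C(c)+o(1)$. Finally I will check that $C(c)<3$ on $(0,\frac12]$, and $C(c)<\frac94$ on $(\frac14,\frac12]$.

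\textbf{Main obstacle.} The hard part is the bookkeeping in Step 2. Every term of order $\sqrt m$ must be tracked, namely:
\begin{itemize}
\item the $-1$ and the $1/(\rho_1-1)$ versus $1/\sqrt m$ correction in the hypothesis;
\item the $c^2$ terms from squaring $\delta$;
\item the error from replacing $\rho_1+\rho_i$ by $\sqrt m$.
\end{itemize}
All of these must combine into a constant strictly below the thresholds $2$ and $\frac32$. I will first carry out the expansion symbolically in $c$, keeping $O(1)\cdot\sqrt m$ terms exactly and sending $o(\sqrt m)$ terms to the error. I will then verify the resulting one-variable inequality in $c$ directly, splitting at $c=\frac14$.
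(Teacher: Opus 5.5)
Your route is correct, but you attribute one estimate to (\ref{eq5}), and (\ref{eq5}) does not give it; the fix is a short bootstrap. The overall structure matches the paper's: interlacing gives $S:=\sum_{i=2}^{n-1}\rho_i^2\ge\sum_{i=2}^{|H|-1}\rho_i^2(H)$ (if $\mu_2<0$ your $k$ does not exist; take $k=1$), Claim~\ref{clm-3.1} handles the middle eigenvalues, and $\rho_n^2=2m-\rho_1^2-S$ goes into (\ref{eq3}). The execution differs. The paper plugs in the threshold $S\ge 2$ (or $\frac32$) immediately and bounds $\rho_n^2\le m-1-\frac{2s}{\rho_1-1}$ via monotonicity. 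It uses only $\sqrt m<\rho_1<\sqrt m+\frac45$ and compares $2s+(2-4c^2-o(1))\sqrt m$ with (\ref{eq8}). Along the way it replaces $\frac{2s\sqrt m}{\rho_1-1}$ by $2s$, discarding a positive part that is really about $2c(1-c)\sqrt m$; this is exactly why its second assertion needs $c>\frac14$. Keeping $S$ free, as you do, yields the explicit bound $S\le 1+(1-2c)^2+o(1)$. This bound is asymptotically attained by $S^+_{m,s}$, whose middle eigenvalues are $-1$, roughly $1-2c$, and zeros. It gives the $\frac32$ threshold for every $c>\frac12-\frac{1}{2\sqrt2}$. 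The paper's version is shorter because it never needs second-order asymptotics for $\rho_1$.

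\textbf{Where $c$ enters.} The $\delta^2/\sqrt m$ terms are not where $c$ enters; they cancel. With $q=\sqrt{m-\delta-S}$ you have
$(\rho_1+\rho_n)\rho_n^2=\frac{(2\delta+S)q^2}{\rho_1+q}=\sqrt m\,\delta+\frac12\sqrt m\,S-\frac{\delta^2}{\sqrt m}+O(S+1)$
and $2\rho_1\delta=2\sqrt m\,\delta+\frac{\delta^2}{\sqrt m}+O(1)$, so (\ref{eq3}) becomes
\[
3\sqrt m\,\delta+\tfrac32\bigl(1-o(1)\bigr)\sqrt m\,S\le 6s+O(1).
\]
The constant $c$ enters only through the correction $2c'(1-c')$ in $\frac{2s}{\rho_1-1}$.

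\textbf{The misattributed estimate.} That correction requires $\rho_1\le\sqrt m+c'+o(1)$, i.e.\ $\delta\le 2c'\sqrt m+o(\sqrt m)$. This does not follow from (\ref{eq5}), which only gives $\delta<\frac85\sqrt m+O(1)$. With (\ref{eq5}) alone you get $\frac{2s}{\rho_1-1}\ge\frac{2s}{\sqrt m}+\frac{2c}{5}-o(1)$, hence only $S\le 2-\frac45c+o(1)$. That proves the first assertion but gives nothing for the $\frac32$ threshold anywhere on $(\frac14,\frac12]$.

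\textbf{The fix.} Drop the nonnegative $S$-term in the display to get $\delta\le\frac{2s}{\sqrt m}+O(m^{-1/2})$, hence $\rho_1\le\sqrt m+c'+O(1/m)$. Feeding this back gives $\delta\ge\frac{2s}{\sqrt m}-1+2c'(1-c')-o(1)$, and therefore $\frac32S\le 3-6c(1-c)+o(1)$. So $C(c)=3-6c(1-c)$, which is $<3$ on $(0,\frac12]$ and at most $\frac{15}{8}<\frac94$ for $c\ge\frac14$.
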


\begin{proof}
Suppose, for the sake of contradiction, that $G^*$ contains an induced subgraph $H$ satisfying
$\sum_{i=2}^{|H|-1}\rho_i^2(H)\geq 2.$
It is known that $\sum_{i=1}^{|H|}\rho_i(H)=0,$
which implies that there exists $i_0\in\{2,\ldots,|H|\!-\!1\}$ such that $\rho_{i_0}(H)<0$ and
$\rho_{i_0-1}(H)\geq0$.

By Lemma \ref{lem-2.2}, we have the following interlacing inequalities:
$\rho_i\geq \rho_i(H)\geq0$ for $i\leq i_0-1$, and $\rho_{n-|H|+i}\leq \rho_i(H)<0$ for $i\geq i_0$.
It follows that
\begin{equation}\label{eq11}
\sum_{i=2}^{n-1}\rho_i^2\geq\sum_{i=2}^{i_0-1}\rho_i^2+\sum_{i=i_0}^{|H|-1}\rho_{n-|H|+i}^2\geq\sum_{i=2}^{|H|-1}\rho_i^2(H)\geq2.
\end{equation}
By Claim \ref{clm-3.1}, $\sum_{i=2}^{n-1}\rho_i^2=o(\sqrt{m}).$
Combining \eqref{eq2} gives $\rho_1+\rho_i\geq\big(1-o(1)\big)\sqrt{m}$ for all $i\in\{2,\ldots,n-1\}$.
Consequently,
\begin{equation}\label{eq12}
\sum_{i=2}^{n-1}(\rho_1+\rho_i)\rho_i^2
\geq
\big(1-o(1)\big)\sqrt{m}\sum_{i=2}^{n-1}\rho_i^2
\geq
\big(2-o(1)\big)\sqrt{m}.
\end{equation}
Recall that $\rho_1^2\geq m-1+\frac{2s}{\rho_1-1}$.
From (\ref{eq11}), we estimate the smallest eigenvalue:
\begin{align*}
\rho_n^2
=
2m-\rho_1^2-\sum_{i=2}^{n-1}\rho_i^2
\leq
2m\!-\!\Big(m\!-\!1\!+\!\frac{2s}{\rho_1\!-\!1}\Big)\!-\!2
=m\!-\!1\!-\!\frac{2s}{\rho_1\!-\!1}.
\end{align*}
Hence,
$
\rho_n\geq-\sqrt{m-1-\frac{2s}{\rho_1-1}}.
$
Moreover, by Claim \ref{clm-3.1}, $(\rho_1+\rho_n)\rho_n^2$ is increasing with respect to $\rho_n$.
Thus, we obtain
\begin{align}
(\rho_1+\rho_n)\rho_n^2
&\geq
\Big(
\sqrt{m\!-\!1\!+\!\frac{2s}{\rho_1\!-\!1}}
\!-\!
\sqrt{m\!-\!1\!-\!\frac{2s}{\rho_1\!-\!1}}
\Big)
\Big(
m\!-\!1\!-\!\frac{2s}{\rho_1\!-\!1}
\Big)                                     \notag\\
&=
\frac{4s}{\rho_1\!-\!1}\
\Big(m\!-\!1\!-\!\frac{2s}{\rho_1\!-\!1}\Big)\Big/
\Big(\sqrt{m\!-\!1\!+\!\frac{2s}{\rho_1\!-\!1}}
+
\sqrt{m\!-\!1\!-\!\frac{2s}{\rho_1\!-\!1}}\Big)    \notag\\
&\geq
\frac{1}{2\sqrt{m}} \frac{4s}{\rho_1\!-\!1}\
\big(m\!-\!1\!-\!\frac{2s}{\rho_1\!-\!1}\big)     \notag\\
&=
\frac{2s\sqrt{m}}{\rho_1\!-\!1}
\!-\!
\frac{2s}{(\rho_1\!-\!1)\sqrt{m}}
\!-\!
\frac{4s^2}{(\rho_1\!-\!1)^2\sqrt{m}}.        \notag
\end{align}
Since $\sqrt{m}<\rho_1<\sqrt{m}+\frac45$ and $s=\big(c\pm o(1)\big)m$, we derive that
\begin{align}\label{eq13}
(\rho_1+\rho_n)\rho_n^2
\geq2s-\Big(4c^2+o(1)\Big)\sqrt{m}.
\end{align}
Summing inequalities (\ref{eq12}) and (\ref{eq13}), we obtain
$$\sum_{i=2}^{n}(\rho_1+\rho_i)\rho_i^2
\geq
2s+\Big(2-4c^2-o(1)\Big)\sqrt{m}.$$
For all constant $c\in(0,\frac12],$ we have $2-4c^2>2-4c$.
Hence, the lower bound obtained above contradicts the upper bound stated in (\ref{eq8}).

Now, let $c\in(\frac14,\frac12]$, and suppose that $G^*$ contains an induced subgraph $H$ with
$\sum_{i=2}^{|H|-1}\rho_i^2(H)\geq \frac32.$
By the same arguments as in (\ref{eq12}-\ref{eq13}),
we can derive that
\begin{equation}\label{eq14}
\sum_{i=2}^{n-1}(\rho_1+\rho_i)\rho_i^2
\geq
\big(1-o(1)\big)\sqrt{m}\sum_{i=2}^{n-1}\rho_i^2
\geq
\big(\frac32-o(1)\big)\sqrt{m}.
\end{equation}
and
\begin{align}\label{eq15}
(\rho_1+\rho_n)\rho_n^2
\geq2s-\Big(4c^2+\frac14+o(1)\Big)\sqrt{m}.
\end{align}
Summing inequalities (\ref{eq14}) and (\ref{eq15}), we obtain
$$\sum_{i=2}^{n}(\rho_1+\rho_i)\rho_i^2
\geq2s+\Big(\frac54-4c^2-o(1)\Big)\sqrt{m}.$$
For all constant $c\in(\frac14,\frac12],$ we have $(\frac14-c)(\frac34-c)<0$,
and thus $\frac54-4c^2>2-4c$.
Therefore, the lower bound obtained above contradicts the upper bound stated in (\ref{eq8}).
This contradiction forces the claim to hold.
\end{proof}

\begin{claim}\label{clm-3.3}
Denote $U_0:=\{u\in U \mid d_U(u)=0\}$ and $U_1=U\setminus U_0$.
Then, $e(U_1)\geq2\big(c-o(1)\big)\sqrt{m}$ and $G^*[U_1]$ is a connected triangle-free graph.
\end{claim}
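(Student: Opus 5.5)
Three facts need proof: the lower bound on $e(U_1)$, triangle-freeness of $G^*[U_1]$, and connectivity of $G^*[U_1]$.

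\emph{Lower bound on $e(U_1)$.} We start from \eqref{eq1}, which gives $\rho_1^2\leq m+e(U)$. Since $U_0$ induces no edges, $e(U)=e(U_1)$. The hypothesis on $\rho_1$ together with \eqref{eq6} yields
$$e(U_1)\geq \rho_1^2-m\geq -1+\frac{2s}{\rho_1-1}\geq 2\big(c-o(1)\big)\sqrt m-1.$$
The constant $-1$ is absorbed into the $o(1)\sqrt m$ term, so the first assertion is immediate.

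\emph{$G^*[U_1]$ is triangle-free.} Every triangle inside $U$ extends, together with $u^*$, to a copy of $K_4$, and $K_4$ is an induced subgraph of $G^*$. Its spectrum is $3,-1,-1,-1$, so $\sum_{i=2}^{3}\rho_i^2(K_4)=2$. This contradicts Claim \ref{clm-3.2}, so $G^*[U]$, and hence $G^*[U_1]$, is triangle-free.

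\emph{$G^*[U_1]$ is connected.} Suppose instead that $G^*[U_1]$ has two components $C_1,C_2$. Each contains an edge, say $a_1b_1\in C_1$ and $a_2b_2\in C_2$. Then $\{u^*,a_1,b_1,a_2,b_2\}$ induces two triangles sharing the vertex $u^*$: the edges $a_ib_j$ with $i\neq j$ are absent because $C_1,C_2$ are different components, and $a_1b_1,a_2b_2$ are present. This graph is the ``bowtie'' (friendship graph $F_2$). Its spectrum is $\frac{1\pm\sqrt{17}}{2},1,-1,-1$, so $\rho_2^2+\rho_3^2+\rho_4^2=1+1+1=3\geq 2$. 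Applying Claim \ref{clm-3.2} to this induced subgraph $H$ gives a contradiction. Hence $G^*[U_1]$ is connected.

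\emph{Where care is needed.} The only points to check are, first, that the vertex sets used really induce the stated graphs; this uses triangle-freeness of $U$ and the component structure. Second, the eigenvalue computations for $K_4$ and the bowtie must be correct, since the bowtie spectrum is the nontrivial one. For the bowtie: the vectors $(0,1,-1,0,0)$ and $(0,0,0,1,-1)$ give eigenvalue $-1$ twice. The antisymmetric vector $(0,1,1,-1,-1)$ gives eigenvalue $1$. The remaining symmetric part reduces to the $2\times2$ quotient matrix $\begin{pmatrix}0&4\\1&1\end{pmatrix}$, with eigenvalues $\frac{1\pm\sqrt{17}}{2}$. Only the weaker threshold $2$ of Claim \ref{clm-3.2} is needed in both cases, so the argument works for every $c\in(0,\frac12]$.
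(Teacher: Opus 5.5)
Your proof is correct and follows the paper's argument. The bound on $e(U_1)$ comes from \eqref{eq1} and \eqref{eq6}, and both structural properties come from forbidden induced subgraphs via Claim \ref{clm-3.2}, with $K_4$ (after adding $u^*$) ruling out triangles. The only difference is the connectivity step: the paper uses the four vertices $\{a_1,b_1,a_2,b_2\}$, which already induce $2K_2$ (spectrum $1,1,-1,-1$, so $\rho_2^2+\rho_3^2=2$), so adding $u^*$ and computing the bowtie spectrum is unnecessary, though your bowtie argument is also valid.
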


\begin{proof}
Note that $e(U_1)=e(U)$. In view of (\ref{eq1}), we obtain
$$
\rho_1^2x_{u^*}
\leq
\bigl(d(u^*)+2e(U_1)+e(U,W)\bigr)x_{u^*}
\leq
\bigl(m+e(U_1)\bigr)x_{u^*}.
$$
Since $\rho_1^2\geq m-1+\frac{2s}{\rho_1-1},$
it follows from (\ref{eq6}) that
$$e(U_1)\geq\rho_1^2-m\geq -1+\frac{2s}{\rho_1-1}\geq2\big(c-o(1)\big)\sqrt{m}.$$

Observe that the graph $2K_2$ has eigenvalues $(1,1,-1,-1)$ and $K_4$ has eigenvalues $(3,-1,-1,-1)$.
By Claim \ref{clm-3.2}, $G^*$ admits no induced copy of $2K_2$ or $K_4$.
Consequently, the subgraph induced by $U_1$ is connected and triangle-free.
\end{proof}

\begin{figure}[htbp]
\centering
\setlength{\tabcolsep}{3pt}

\begin{tabular}{cccc}

\parbox[c][4.8cm][c]{0.23\textwidth}{%
\centering
\begin{tikzpicture}[scale=3, x=1mm, y=1mm]
\node[circle,fill=black,draw=black,inner sep=0pt,minimum size=1.5mm] (m11) at (0,4.5) {};
\node[circle,fill=black,draw=black,inner sep=0pt,minimum size=1.5mm] (m12) at (4,0.5) {};
\node[circle,fill=black,draw=black,inner sep=0pt,minimum size=1.5mm] (m13) at (2,-4) {};
\node[circle,fill=black,draw=black,inner sep=0pt,minimum size=1.5mm] (m14) at (-2,-4) {};
\node[circle,fill=black,draw=black,inner sep=0pt,minimum size=1.5mm] (m15) at (-4,0.5) {};

\draw[line width=0.3mm, black] (m11) -- (m12);
\draw[line width=0.3mm, black] (m12) -- (m13);
\draw[line width=0.3mm, black] (m13) -- (m14);
\draw[line width=0.3mm, black] (m14) -- (m15);
\draw[line width=0.3mm, black] (m15) -- (m11);
\end{tikzpicture}
}
&
\parbox[c][4.8cm][c]{0.23\textwidth}{%
\centering
\begin{tikzpicture}[scale=3, x=1mm, y=1mm]
\node[circle,fill=black,draw=black,inner sep=0pt,minimum size=1.5mm] (m11) at (0,4.5) {};
\node[circle,fill=black,draw=black,inner sep=0pt,minimum size=1.5mm] (m12) at (4,0.5) {};
\node[circle,fill=black,draw=black,inner sep=0pt,minimum size=1.5mm] (m13) at (2,-4) {};
\node[circle,fill=black,draw=black,inner sep=0pt,minimum size=1.5mm] (m14) at (-2,-4) {};
\node[circle,fill=black,draw=black,inner sep=0pt,minimum size=1.5mm] (m15) at (-4,0.5) {};

\draw[line width=0.3mm, black] (m11) -- (m12);
\draw[line width=0.3mm, black] (m12) -- (m13);
\draw[line width=0.3mm, black] (m13) -- (m14);
\draw[line width=0.3mm, black] (m14) -- (m15);
\draw[line width=0.3mm, black] (m15) -- (m11);
\draw[line width=0.3mm, black] (m15) -- (m12);
\draw[line width=0.3mm, black] (m15) -- (m13);
\end{tikzpicture}
}
&
\parbox[c][4.8cm][c]{0.23\textwidth}{%
\centering
\begin{tikzpicture}[scale=3, x=1mm, y=1mm]
\node[circle,fill=black,draw=black,inner sep=0pt,minimum size=1.5mm] (n11) at (0,4.5) {};
\node[circle,fill=black,draw=black,inner sep=0pt,minimum size=1.5mm] (n12) at (4,0.5) {};
\node[circle,fill=black,draw=black,inner sep=0pt,minimum size=1.5mm] (n13) at (2,-4) {};
\node[circle,fill=black,draw=black,inner sep=0pt,minimum size=1.5mm] (n14) at (-2,-4) {};
\node[circle,fill=black,draw=black,inner sep=0pt,minimum size=1.5mm] (n15) at (-4,0.5) {};
\node[circle,fill=black,draw=black,inner sep=0pt,minimum size=1.5mm] (n16) at (7,2.5) {};

\draw[line width=0.3mm, black] (n11) -- (n12);
\draw[line width=0.3mm, black] (n12) -- (n13);
\draw[line width=0.3mm, black] (n13) -- (n14);
\draw[line width=0.3mm, black] (n14) -- (n15);
\draw[line width=0.3mm, black] (n15) -- (n11);
\draw[line width=0.3mm, black] (n16) -- (n11);
\draw[line width=0.3mm, black] (n16) -- (n15);
\draw[line width=0.3mm, black] (n16) -- (n13);
\draw[line width=0.3mm, black] (n15) -- (n12);
\end{tikzpicture}
}
&
\parbox[c][4.8cm][c]{0.23\textwidth}{%
\centering
\begin{tikzpicture}[scale=3, x=1mm, y=1mm]
\node[circle,fill=black,draw=black,inner sep=0pt,minimum size=1.5mm] (s11) at (0,4.5) {};
\node[circle,fill=black,draw=black,inner sep=0pt,minimum size=1.5mm] (s12) at (4,0.5) {};
\node[circle,fill=black,draw=black,inner sep=0pt,minimum size=1.5mm] (s13) at (2,-4) {};
\node[circle,fill=black,draw=black,inner sep=0pt,minimum size=1.5mm] (s14) at (-2,-4) {};
\node[circle,fill=black,draw=black,inner sep=0pt,minimum size=1.5mm] (s15) at (-4,0.5) {};
\node[circle,fill=black,draw=black,inner sep=0pt,minimum size=1.5mm] (s16) at (7,1.5) {};

\draw[line width=0.3mm, black] (s11) -- (s12);
\draw[line width=0.3mm, black] (s12) -- (s13);
\draw[line width=0.3mm, black] (s13) -- (s14);
\draw[line width=0.3mm, black] (s14) -- (s15);
\draw[line width=0.3mm, black] (s15) -- (s11);
\draw[line width=0.3mm, black] (s16) -- (s11);
\draw[line width=0.3mm, black] (s16) -- (s14);
\draw[line width=0.3mm, black] (s16) -- (s13);
\draw[line width=0.3mm, black] (s15) -- (s12);
\end{tikzpicture}
}

\\[-7mm]
\makebox[0.23\textwidth][c]{\small\strut $M_{1}$}
&
\makebox[0.23\textwidth][c]{\small\strut $M_{2}$}
&
\makebox[0.23\textwidth][c]{\small\strut $M_{3}$}
&
\makebox[0.23\textwidth][c]{\small\strut $M_{4}$}
\end{tabular}
\caption{The forbidden induced subgraphs $M_1$, $M_2$, $M_3$, and $M_4$.}
\label{fig-M1234}
\end{figure}

\begin{table}[htbp]
\caption{The intermediate eigenvalues of $M_1$, $M_2$, $M_3$, and $M_4$.}
\label{tab-M1234}
\begin{tabular*}{\textwidth}{@{\extracolsep{\fill}}ccccc@{}}
\toprule
& $M_1$ & $M_2$ & $M_3$ & $M_4$\\
\midrule
$\rho_2$ & 0.6180 & 0.6180 & 0.7020 & 1.0000 \\
$\rho_3$ & 0.6180 & -0.4626& 0.0000 & 0.0000 \\
$\rho_4$ & -1.6180 & -1.4728& 0.0000 & 0.0000 \\
$\rho_5$ & -- & -- & -1.2855 & -2.0000 \\
\bottomrule
\end{tabular*}
\end{table}

\begin{claim}\label{clm-3.4}
$e(W)=0$.
\end{claim}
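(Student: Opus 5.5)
We argue by contradiction: suppose $w_1w_2\in E(W)$. The plan is to locate a small induced subgraph of $G^*$ through $u^*$, some vertices of $U$, and $w_1,w_2$ that violates Claim~\ref{clm-3.2}. The most useful forbidden configurations are:
\begin{itemize}
\item $2K_2$ and $K_4$, as in Claim~\ref{clm-3.3};
\item the graphs $M_1,\dots,M_4$ of Figure~\ref{fig-M1234}, whose intermediate eigenvalues in Table~\ref{tab-M1234} give $\sum_{i=2}^{|H|-1}\rho_i^2(H)\geq 2$ (for instance $M_1$ gives about $3.38$, and $M_4$ gives $5$);
\item any further small graph whose intermediate eigenvalues satisfy the same bound, checked directly.
\end{itemize}

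\textbf{First step.} Since $u^*$ has no neighbour in $W$, for every $u\in U$ the set $\{u^*,u,w_1,w_2\}$ would induce $2K_2$ if $u$ had no neighbour in $\{w_1,w_2\}$. So every vertex of $U$ is adjacent to $w_1$ or $w_2$. In particular this holds for both endpoints of every edge of $G^*[U_1]$. By Claim~\ref{clm-3.3}, $G^*[U_1]$ is connected, triangle-free, and has at least $2(c-o(1))\sqrt m$ edges, so for large $m$ it contains an induced path $u_1u_2u_3$. It may even contain an induced $P_4$ or a star with three leaves, since a large connected triangle-free graph with no induced $2K_2$ has a vertex of large degree.

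\textbf{Second step: case analysis.} Consider the induced subgraph on $\{u^*,u_1,u_2,u_3,w_1,w_2\}$ or on a $5$-vertex subset of it. Each $u_j$ sees $w_1$, $w_2$, or both. We run through the adjacency patterns, up to the symmetry $w_1\leftrightarrow w_2$ and $u_1\leftrightarrow u_3$, and in each case exhibit one of the following:
\begin{itemize}
\item an induced $K_4$, which arises e.g.\ when $u_1u_2$ is an edge and both $u_1,u_2$ are adjacent to $w_1$ and $w_2$;
\item an induced $2K_2$;
\item an induced $M_i$; for example, the $5$-cycle $u^*u_1w_1w_2u_3u^*$ together with chords coming from $u_2$ produces $M_1$--$M_4$-type pictures;
\item a graph whose eigenvalues we compute directly and find that $\sum_{i\ge2}^{|H|-1}\rho_i^2(H)\ge 2$.
\end{itemize}
Triangle-freeness of $U_1$ keeps the number of patterns small, because $u_1u_3\notin E$.

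\textbf{Alternative route.} If some pattern survives the case analysis, we use the Perron vector. Redoing (\ref{eq1}) with $m=d(u^*)+e(U)+e(U,W)+e(W)$ gives $\rho_1^2\le m+e(U)-e(W)$, so edges in $W$ cost spectral radius. We would then apply Lemma~\ref{lem-2.3}: move the edges at $w_1$ (or $w_2$) inside $W$ onto $u^*$, since $x_{u^*}$ is maximal. This increases $\rho_1$ while keeping $m$ fixed, and we must check that $t$ does not exceed $s$. That would contradict the choice of $G^*$ as having maximum spectral radius in $\mathcal G$.

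\textbf{Main obstacle.} The hard part is the exhaustive but finite case check in the second step: making sure every surviving configuration contains a forbidden induced subgraph. The backup step also requires controlling the triangle count after switching, which I expect to be the most delicate point if the purely spectral-forbidden-subgraph argument falls short.
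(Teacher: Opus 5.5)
Your route is correct but genuinely different from the paper's.

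\textbf{The paper's route.} It first uses the maximality of $x_{u^*}$. If some $w_i$ were adjacent to all of $U$, then $N(u^*)\subsetneq N(w_i)$ would give $x_{w_i}>x_{u^*}$, so $d_U(w_i)\le |U|-1$. This yields private neighbours $u_1\in N_U(w_1)\setminus N_U(w_2)$ and $u_2\in N_U(w_2)\setminus N_U(w_1)$. Then $M_1$ forces every private neighbour of $w_1$ to be adjacent to every private neighbour of $w_2$, and $M_3$ forces exactly one of each. Finally $|U|\ge\rho_1>\sqrt m$ supplies a common neighbour $u_3$ of $w_1,w_2$, which is excluded by triangle-freeness, $M_4$ or $M_2$.

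\textbf{Your route.} You fix an induced path $u_1u_2u_3$ in $G^*[U]$ and inspect $\{u^*,u_1,u_2,u_3,w_1,w_2\}$. This needs neither the Perron-vector step nor the size of $U$. You left the check undone and hedged, but it does close. Write $s_j\in\{1,2,\mathrm{B}\}$ according as $u_j$ sees only $w_1$, only $w_2$, or both, and use the symmetries $w_1\leftrightarrow w_2$ and $u_1\leftrightarrow u_3$.

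If $s_1=\mathrm{B}$, there are two possibilities:
\begin{itemize}
\item $s_2=\mathrm{B}$, and then $\{u_1,u_2,w_1,w_2\}$ is a $K_4$;
\item $s_2=1$, and then $u_1$ is joined to the induced path $u^*u_2w_1w_2$, which is the gem $M_2$.
\end{itemize}
If $s_1,s_3\in\{1,2\}$ differ, you get an induced $C_5=M_1$. If $s_1=s_3=1$, there are three cases:
\begin{itemize}
\item $s_2=\mathrm{B}$ makes $u_2$ joined to the induced path $u^*u_1w_1w_2$, again $M_2$;
\item $s_2=2$ gives exactly $M_3$;
\item $s_2=1$ gives $M_7$ from Figure~\ref{fig-M5678}, with hub $u_2$, rim $u^*u_1w_1u_3$ and pendant $w_2$ at $w_1$. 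Here $\sum_{i=2}^{5}\rho_i^2(M_7)\approx 2.57\ge 2$.
\end{itemize}
That last case, where one $w_i$ sees the whole path, is exactly what the paper rules out with the Perron vector. So your approach trades that spectral step for one extra forbidden graph and a fixed six-vertex check.

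\textbf{The fallback.} Drop it. Switching $w_1w_2$ to $u^*w_2$ via Lemma~\ref{lem-2.3} does raise $\rho_1$ and keeps the spectral inequality. However, it creates the triangle $u^*w_2u$ for every $u\in N_U(w_2)$. So $t$ can exceed $s$, the new graph need not lie in $\mathcal{G}$, and as stated the switch gives no contradiction.
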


\begin{proof}
Suppose that there exist $w_1,w_2\in W$ such that $w_1w_2\in E(W)$. Then
\begin{equation}\label{eq16}
N_U(w_1)\cup N_U(w_2)=U.
\end{equation}
If (\ref{eq16}) fails, then there exists a vertex $u \in U$ that is not adjacent to both $w_1$ and $w_2$.
In this case,
the subgraph induced by $\{u^*, u, w_1, w_2\}$ is isomorphic to $2K_2,$
yielding a contradiction. Hence, (\ref{eq16}) holds.

Still under the above assumption,
the given condition $x_{u^*} \geq \max\{x_{w_1}, x_{w_2}\}$ leads to the following degree bounds for $w_1$ and $w_2$ within $U$:
\begin{equation}\label{eq17}
d_U(w_i) \leq |U| - 1 \quad \text{for each } i \in \{1,2\}.
\end{equation}
Let $U'=N_U(w_1)\setminus N_U(w_2)$ and $U''=N_U(w_2)\setminus N_U(w_1)$.
Based on (\ref{eq16}) and (\ref{eq17}),
we know that both $U'$ and $U''$ are nonempty.

Take arbitrary vertices $u_1 \in U'$ and $u_2 \in U''.$
If $u_1u_2 \notin E(U)$,
then the subgraph induced by $\{u^*, u_1, u_2, w_1, w_2\}$ is isomorphic to $M_1$ (see Fig.\,\ref{fig-M1234}),
which contradicts Claim \ref{clm-3.2}.
Hence, $e(U',U'')=|U'||U''|$.
If there exists the third vertex $u_3\in U'\cup U''$,
then the subgraph induced by $\{u^*, u_1, u_2, u_3, w_1, w_2\}$ is isomorphic to $M_3$ (see Fig.\,\ref{fig-M1234}),
again contradicting Claim \ref{clm-3.2}.
Therefore, we must have $|U'| = |U''| = 1$.

Now, from the covering property established in (\ref{eq16}),
we know that
$U \setminus \{u_1, u_2\} = N_U(w_1) \cap N_U(w_2).$
Since $\rho_1x_{u^*}=\sum_{u\in U}x_u\leq |U|x_{u^*}$,
we have $|U|\geq \rho_1$.
In view of (\ref{eq2}), we obtain $|U|\geq \rho_1>\sqrt{m}$.
Pick $u_3 \in U \setminus \{u_1, u_2\}.$
By Claim \ref{clm-3.3}, $G^*[U_1]$ is triangle-free, which implies that $\{u_1,u_2\}\nsubseteq N_U(u_3)$.
If $u_1,u_2\notin N_U(u_3)$,
then the subgraph induced by $\{u^*, u_1, u_2, u_3, w_1, w_2\}$ is isomorphic to $M_4$ (see Fig.\,\ref{fig-M1234}),
contradicting Claim \ref{clm-3.2}.
Hence, we may assume that $u_1\in N_U(u_3)$ and  $u_2\notin N_U(u_3)$.
But now, the join of $u_3$ and $u^*u_1w_1w_2$ is a copy of $M_2$ (see Fig.\,\ref{fig-M1234}),
also a contradiction.
Therefore, the initial assumption $e(W) \geq 1$ is false, and we conclude that $e(W) = 0$.
\end{proof}

\begin{claim}\label{clm-3.5}
$G^*[U_1]$ is a complete bipartite graph.
Moreover, if $G^*[U_1]$ is not a star, then $U_0=\emptyset$ and $W\neq\emptyset$.
\end{claim}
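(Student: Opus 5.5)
The plan is to exploit the forbidden induced subgraphs from Claim \ref{clm-3.2} together with the structure already obtained in Claims \ref{clm-3.3} and \ref{clm-3.4}. The basic tool is the observation used there: $G^*$ has no induced $2K_2$ and no induced $K_4$. In particular $G^*$ has no induced $P_4$ plus an isolated edge. More relevant is that any two disjoint edges must be joined by at least one edge.

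\textbf{Step 1: $G^*[U_1]$ is complete bipartite.} By Claim \ref{clm-3.3}, $G^*[U_1]$ is connected and triangle-free, so it suffices to show it is $P_4$-free. A connected triangle-free $P_4$-free graph is complete bipartite: connected $P_4$-free graphs are joins (cographs), and a triangle-free join has independent sides. So suppose $u_1u_2u_3u_4$ is an induced $P_4$ in $G^*[U_1]$. Then $\{u^*,u_1,u_2,u_3,u_4\}$ induces $P_4\vee K_1$, i.e.\ the fan $F_4$ (a $P_4$ plus a dominating vertex). I would compute its spectrum. Its characteristic polynomial is $x^5-7x^3-6x^2+2x+\dots$; I expect numerically eigenvalues roughly $2.94,\,0.62,\,-0.4\dots,\,-1.6,\,-1.5$, and I would check that $\rho_2^2+\rho_3^2+\rho_4^2\ge 2$. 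If this fails, I would enlarge the configuration. Since $|U|>\sqrt m$ and $e(U_1)\ge 2(c-o(1))\sqrt m$, there are further vertices of $U$, and I would add one such vertex $u_5$, giving a 6-vertex configuration analogous to $M_3$/$M_4$, whose intermediate eigenvalue square sum exceeds $2$. This spectral verification is the step I expect to be the main obstacle, because a single small subgraph may not quite reach the threshold $2$ (or $\tfrac32$ when $c>\frac14$). Some case analysis over the neighbourhood of a fifth vertex may then be needed.

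\textbf{Step 2: $U_0=\emptyset$ if $G^*[U_1]=K_{a,b}$ with $a,b\ge2$.} Suppose $v\in U_0$. Take a $C_4$ $u_1u_2u_3u_4$ inside $K_{a,b}$. Then $\{u^*,v,u_1,u_2,u_3,u_4\}$ induces $K_1\vee(C_4\cup K_1)$. Alternatively, taking edges $u_1u_2$ and $u_3u_4$ yields $\{u_1,u_2,u_3,u_4,v\}$ inducing $C_4\cup K_1$, which has eigenvalues $2,0,0,0,-2$ and is useless. So I would use the 6-vertex graph containing $u^*$ and compute its intermediate eigenvalues, which are expected to include $0,0,0$ and the negative eigenvalue $-2$ of the $C_4$ part, giving a square sum of at least $4\ge 2$ and hence a contradiction with Claim \ref{clm-3.2}. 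More carefully, I would compute via the equitable partition $\{u^*\},\{v\},\{u_1,u_3\},\{u_2,u_4\}$.

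\textbf{Step 3: $W\neq\emptyset$ in the same case.} If $W=\emptyset$, then $G^*=K_1\vee K_{a,b}$ (using $U_0=\emptyset$), so $t(G^*)=ab$ and $m=a+b+ab$. I would compute $\rho_1$ directly from the quotient matrix and show that $\rho_1^2\ge m-1+\frac{2s}{\rho_1-1}$ together with $ab\le s$ is impossible for $a,b\ge2$ and $m$ large. Alternatively, I would invoke maximality of $\rho_1$: moving edges via Lemma \ref{lem-2.3} produces a graph in $\mathcal G$ with larger spectral radius.
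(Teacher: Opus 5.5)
Steps 1 and 2 are sound and use the same two forbidden configurations as the paper: $K_1\nabla P_4$ is $M_2$ and $K_1\nabla(C_4\cup K_1)$ is $M_6$.

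Your worry in Step 1 is unnecessary. $K_1\nabla P_4$ has characteristic polynomial $(x^2+x-1)(x^3-x^2-5x-2)=x^5-7x^3-6x^2+3x+2$, with spectrum approximately $2.935,\,0.618,\,-0.463,\,-1.473,\,-1.618$. The intermediate square sum is therefore about $2.77\ge 2$, and no fifth vertex is needed.

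In Step 2 your prediction is wrong, but the error is harmless. The spectrum of $K_1\nabla(C_4\cup K_1)$ is approximately $3.323,\,0.358,\,0,\,0,\,-1.681,\,-2$. So the $-2$ inherited from $C_4$ is the smallest eigenvalue and is excluded, and only two zeros occur. The intermediate square sum is about $2.96$, still above $2$.

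The real difference from the paper is how you obtain complete bipartiteness. You use the fact that a connected, triangle-free, $P_4$-free graph is complete bipartite (a connected cograph is a join, and a triangle-free join has independent sides). The paper instead fixes an induced $C_4$ $H_0$ in $G^*[U_1]$ and uses $M_6$ to show that every vertex of $U\setminus V(H_0)$ has a neighbour on $H_0$, which gives $U_0=\emptyset$ immediately. It then uses further forbidden configurations and triangle-freeness to force each such vertex to see exactly an antipodal pair of $H_0$. Your route is shorter and handles the star case uniformly; the paper's obtains $U_0=\emptyset$ in the same pass, which you recover separately in Step 2 with the same graph.

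Step 3 is not yet an argument, and it looks for the contradiction in the wrong place. Computing $\rho_1$ does not help. For $a=b=k$ one gets $\rho_1=\frac{k+\sqrt{k^2+8k}}{2}$ and $\rho_1^2-m=2k-4+o(1)$. So the spectral inequality holds for every $s$ up to roughly $k^2\approx m$, in particular for $s\approx cm$.

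The two facts you already wrote down, $t(G^*)=ab\le s$ and $m=a+b+ab$, suffice. Since $(a-2)(b-2)\ge 0$, we get $a+b\le\frac12 ab+2$. Hence $m\le\frac32 ab+2\le\frac32 s+2=(\frac32 c+o(1))m<m$, because $c\le\frac12$.

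The paper runs the same count without first assuming $W=\emptyset$. Using $U_0=\emptyset$ and $ab=e(U)\le t(G^*)\le s$, it gets
\[
e\bigl(U\cup\{u^*\}\bigr)=|U|+ab\le \tfrac32 ab+2\le \tfrac32 s+2=\bigl(\tfrac32 c+o(1)\bigr)m<m,
\]
so $W\neq\emptyset$.

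Your fallback via Lemma~\ref{lem-2.3} is not valid as stated. The switching raises $\rho_1$, so the spectral condition is preserved. But it can create triangles, so the new graph need not satisfy $t\le s$ and need not lie in $\mathcal G$.
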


\begin{proof}
If $G^*[U_1]$ is a star, we are done.
Now, assume that $G^*[U_1]$ is not a star.
From Claim \ref{clm-3.3}, we know that $G^*[U_1]$ is connected and triangle-free.
Hence, $G^*[U_{1}]$ contain either $P_{4}$ or $C_{4}$ as an induced subgraph.
If $G^*[U_{1}]$ contains an induced copy of $P_4$,
then $G^*$ contains $M_2$ as an induced subgraph (see Fig.\,\ref{fig-M1234}), which contradicts Claim \ref{clm-3.2}.
Therefore, $G^*[U_{1}]$ must contain an induced subgraph $H_0$, which is a 4-cycle.
Since $G^*$ cannot contain an induced copy of $M_6$ (see Fig.\,\ref{fig-M5678}),
we have $d_{V(H_0)}(u)\neq0$ for any $u\in U\setminus V(H_0)$.
Therefore, we conclude that $U_0=\emptyset$.

\vspace{-6mm}
\begin{figure}[htbp]
\centering
\setlength{\tabcolsep}{3pt}

\begin{tabular}{cccc}

\parbox[c][4.8cm][c]{0.23\textwidth}{%
\centering
\begin{tikzpicture}[scale=2.6, x=1mm, y=1mm]
\node[circle,fill=black,draw=black,inner sep=0pt,minimum size=1.5mm] (n41) at (0,5) {};
\node[circle,fill=black,draw=black,inner sep=0pt,minimum size=1.5mm] (n42) at (3,0) {};
\node[circle,fill=black,draw=black,inner sep=0pt,minimum size=1.5mm] (n43) at (0,-5) {};
\node[circle,fill=black,draw=black,inner sep=0pt,minimum size=1.5mm] (n44) at (-3,0) {};
\node[circle,fill=black,draw=black,inner sep=0pt,minimum size=1.5mm] (n45) at (6,1.8) {};
\node[circle,fill=black,draw=black,inner sep=0pt,minimum size=1.5mm] (n46) at (6,0) {};

\draw[line width=0.3mm, black] (n44) -- (n41);
\draw[line width=0.3mm, black] (n41) -- (n42);
\draw[line width=0.3mm, black] (n42) -- (n43);
\draw[line width=0.3mm, black] (n43) -- (n44);
\draw[line width=0.3mm, black] (n44) -- (n42);
\draw[line width=0.3mm, black] (n42) -- (n45);
\draw[line width=0.3mm, black] (n42) -- (n46);
\end{tikzpicture}
}
&
\parbox[c][4.8cm][c]{0.23\textwidth}{%
\centering
\begin{tikzpicture}[scale=2.6, x=1mm, y=1mm]
\node[circle,fill=black,draw=black,inner sep=0pt,minimum size=1.5mm] (m51) at (0,5) {};
\node[circle,fill=black,draw=black,inner sep=0pt,minimum size=1.5mm] (m52) at (-4,0) {};
\node[circle,fill=black,draw=black,inner sep=0pt,minimum size=1.5mm] (m53) at (0,0) {};
\node[circle,fill=black,draw=black,inner sep=0pt,minimum size=1.5mm] (m54) at (4,0) {};
\node[circle,fill=black,draw=black,inner sep=0pt,minimum size=1.5mm] (m55) at (0,-5) {};
\node[circle,fill=black,draw=black,inner sep=0pt,minimum size=1.5mm] (m56) at (6,4) {};

\draw[line width=0.3mm, black] (m51) -- (m52);
\draw[line width=0.3mm, black] (m52) -- (m55);
\draw[line width=0.3mm, black] (m55) -- (m54);
\draw[line width=0.3mm, black] (m54) -- (m51);
\draw[line width=0.3mm, black] (m51) -- (m53);
\draw[line width=0.3mm, black] (m55) -- (m53);
\draw[line width=0.3mm, black] (m52) -- (m53);
\draw[line width=0.3mm, black] (m53) -- (m54);
\draw[line width=0.3mm, black] (m56) -- (m53);
\end{tikzpicture}
}
&
\parbox[c][4.8cm][c]{0.23\textwidth}{%
\centering
\begin{tikzpicture}[scale=2.6, x=1mm, y=1mm]
\node[circle,fill=black,draw=black,inner sep=0pt,minimum size=1.5mm] (m61) at (0,5) {};
\node[circle,fill=black,draw=black,inner sep=0pt,minimum size=1.5mm] (m62) at (-4,0) {};
\node[circle,fill=black,draw=black,inner sep=0pt,minimum size=1.5mm] (m63) at (0,0) {};
\node[circle,fill=black,draw=black,inner sep=0pt,minimum size=1.5mm] (m64) at (4,0) {};
\node[circle,fill=black,draw=black,inner sep=0pt,minimum size=1.5mm] (m65) at (0,-5) {};
\node[circle,fill=black,draw=black,inner sep=0pt,minimum size=1.5mm] (m66) at (7,1) {};

\draw[line width=0.3mm, black] (m61) -- (m62);
\draw[line width=0.3mm, black] (m62) -- (m65);
\draw[line width=0.3mm, black] (m65) -- (m64);
\draw[line width=0.3mm, black] (m64) -- (m61);
\draw[line width=0.3mm, black] (m61) -- (m63);
\draw[line width=0.3mm, black] (m65) -- (m63);
\draw[line width=0.3mm, black] (m62) -- (m63);
\draw[line width=0.3mm, black] (m63) -- (m64);
\draw[line width=0.3mm, black] (m66) -- (m61);
\end{tikzpicture}
}
&
\parbox[c][4.8cm][c]{0.23\textwidth}{%
\centering
\begin{tikzpicture}[scale=2.6, x=1mm, y=1mm]
\node[circle,fill=black,draw=black,inner sep=0pt,minimum size=1.5mm] (m71) at (0,5) {};
\node[circle,fill=black,draw=black,inner sep=0pt,minimum size=1.5mm] (m72) at (-4,0) {};
\node[circle,fill=black,draw=black,inner sep=0pt,minimum size=1.5mm] (m73) at (0,0) {};
\node[circle,fill=black,draw=black,inner sep=0pt,minimum size=1.5mm] (m74) at (4,0) {};
\node[circle,fill=black,draw=black,inner sep=0pt,minimum size=1.5mm] (m75) at (0,-5) {};
\node[circle,fill=black,draw=black,inner sep=0pt,minimum size=1.5mm] (m76) at (6,0) {};
\node[circle,fill=black,draw=black,inner sep=0pt,minimum size=1.5mm] (m77) at (8,0) {};

\draw[line width=0.3mm, black] (m71) -- (m72);
\draw[line width=0.3mm, black] (m72) -- (m75);
\draw[line width=0.3mm, black] (m75) -- (m74);
\draw[line width=0.3mm, black] (m74) -- (m71);
\draw[line width=0.3mm, black] (m71) -- (m73);
\draw[line width=0.3mm, black] (m75) -- (m73);
\draw[line width=0.3mm, black] (m72) -- (m73);
\draw[line width=0.3mm, black] (m73) -- (m74);
\draw[line width=0.3mm, black] (m71) -- (m76);
\draw[line width=0.3mm, black] (m75) -- (m76);
\draw[line width=0.3mm, black] (m71) -- (m77);
\draw[line width=0.3mm, black] (m75) -- (m77);
\end{tikzpicture}
}
\\[-7mm]
\makebox[0.23\textwidth][c]{\small\strut $M_{5}$}
&
\makebox[0.23\textwidth][c]{\small\strut $M_{6}$}
&
\makebox[0.23\textwidth][c]{\small\strut $M_{7}$}
&
\makebox[0.23\textwidth][c]{\small\strut $M_{8}$}
\end{tabular}
\caption{The forbidden induced subgraphs $M_5$, $M_6$, $M_7$, and $M_8$.}
\label{fig-M5678}
\end{figure}

\vspace{-4mm}
\begin{table}[htbp]
\caption{The intermediate eigenvalues of $M_5$, $M_6$, $M_7$, and $M_8$.}
\label{tab-M5678}
\begin{tabular*}{\textwidth}{@{\extracolsep{\fill}}ccccc@{}}
\toprule
& $M_5$ & $M_6$ & $M_7$ & $M_8$ \\
\midrule
$\rho_2$ & 0.5293 & 0.3579 & 0.7347 & 0.5530 \\
$\rho_3$ & 0.0000 & 0.0000 & 0.0000 & 0.0000 \\
$\rho_4$ & 0.0000 & 0.0000 & -0.5975 & 0.0000 \\
$\rho_5$ & -1.3429 & -1.6813 & -1.2927 & 0.0000 \\
$\rho_6$ & -- & -- & -- & -1.3673 \\
\bottomrule
\end{tabular*}
\end{table}

Let $H_0=u_1u_2u_3u_4u_1$.
If there exists $u\in U\setminus V(H_0)$ such that $d_{V(H_0)}(u)=1$,
say $uu_1\in E(G^*)$,
then the subgraph induced by $\{u^*,u_0,u_1,u_2,u_3\}$ is isomorphic to $M_4$,
a contradiction.
Hence, we have $d_{V(H_0)}(u)\geq2$ for each $u\in U\setminus V(H_0)$.
Since $G^*[U]$ is triangle-free, we conclude that for any $u\in U\setminus V(H_0)$, either
$N_U(u)=\{u_1,u_3\}$ or $N_U(u)=\{u_2,u_4\}.$
It follows that
$G^*[U]$ is complete bipartite.

Let $U'\cup U''$ be the bipartition of $G^*[U]$, where $|U'|=a\geq2$ and $|U''|=b\geq2$.
Since $a+b=|U|$, we have
\begin{equation}\label{eq18}
ab\geq 2(|U|-2).
\end{equation}
Note that $ab=e(U)\leq t(G^*)\leq s.$
Combining (\ref{eq18}) gives
\begin{equation}\label{eq19}
|U|\leq \frac{s}{2}+2.
\end{equation}
Recall that $s=\big(c\pm o(1)\big)m$. Then,
\begin{equation}\label{eq20}
e\big(U\cup\{u^*\}\big)=|U|+ab\leq\frac32s+2\leq
\big(\frac32c+o(1)\big)m.
\end{equation}
Since $c\leq\frac12$, we have $e\big(U\cup\{u^*\}\big)<m$.
Hence, $W\neq\emptyset$.
\end{proof}

\begin{claim}\label{clm-3.6}
$G^*[U_1]$ is a star.
\end{claim}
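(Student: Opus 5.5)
Assume, for contradiction, that $G^*[U_1]$ is not a star. By Claims \ref{clm-3.4} and \ref{clm-3.5} we then know the following.
\begin{itemize}
\item $U_0=\emptyset$.
\item $G^*[U]=K_{a,b}$ with bipartition $U'\cup U''$ and $a,b\ge 2$.
\item $W\neq\emptyset$ and $W$ is independent.
\end{itemize}
So every edge of $G^*$ is of one of three kinds: incident to $u^*$, inside $U$, or between $U$ and $W$. The plan has two stages: first pin down $N_U(w)$ for $w\in W$ using forbidden induced subgraphs, then derive a contradiction from triangle counting combined with the spectral inequality (\ref{eq1}).

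\emph{Step 1 (local structure of $W$).} Fix an induced $4$-cycle $H_0=u_1u_2u_3u_4u_1$ with $u_1,u_3\in U'$ and $u_2,u_4\in U''$. Together with $u^*$ it induces the wheel $u^*\nabla C_4$. A vertex $w\in W$ is nonadjacent to $u^*$.
\begin{itemize}
\item If $w$ has exactly one neighbour on $H_0$, then $\{u^*\}\cup V(H_0)\cup\{w\}$ induces $M_7$. Since $\sum_{i=2}^{|M_7|-1}\rho_i^2(M_7)\ge 2$ by Table \ref{tab-M5678}, this contradicts Claim \ref{clm-3.2}.
\item If $w$ has no neighbour on $H_0$, take a neighbour $u$ of $w$ in $U$. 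Then $u$ lies on some other induced $4$-cycle through two vertices of $H_0$, and the previous bullet applies to that cycle.
\end{itemize}
Hence $d_{H}(w)\ge 2$ for every induced $C_4$ $H$ in $G^*[U]$ that meets $N_U(w)$. Varying $H$ over all such cycles gives a dichotomy. Either $w$ has neighbours on both sides, and then $w$ lies in at least $d_{U'}(w)\,d_{U''}(w)\ge 1$ triangles. Or $N_U(w)$ is contained in one side; then it has size at least $2$ on that side, and the same argument pushes it to be (nearly) the whole side. Next, $M_8$ (two vertices attached to the same opposite pair of the wheel's rim) together with $M_5$ and $M_6$ should bound how many $W$-vertices can be attached to the same side in the "one side" case. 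The table values are what make these configurations forbidden via Claim \ref{clm-3.2}; for $c\in(\frac14,\frac12]$ the sharper threshold $\frac32$ is available if needed.

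\emph{Step 2 (counting).} The triangles through $u^*$ number exactly $ab$. Each $w$ with neighbours on both sides adds $d_{U'}(w)d_{U''}(w)$ triangles. This gives
\[
ab+\sum_{w\in W}d_{U'}(w)\,d_{U''}(w)\le t(G^*)\le s .
\]
Meanwhile $m=a+b+ab+\sum_{w}d_U(w)$, and by (\ref{eq19})--(\ref{eq20}) the vertices of $W$ must carry at least $(1-\frac32c-o(1))m$ edges. By Step 1 the "one side" $W$-vertices are few, so most of these edges come from $W$-vertices with neighbours on both sides. Each such vertex contributes roughly $d_U(w)$ edges but at least $d_U(w)-1$ triangles. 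Summed, this forces $t(G^*)\ge(1-\frac32c-o(1))m$, which exceeds $s=(c+o(1))m$ whenever $c<\frac25$. For the remaining range of $c$ the count is not enough by itself. There I would instead use (\ref{eq1}) more sharply: since $e(W)=0$,
\[
\rho_1^2x_{u^*}=|U|x_{u^*}+\sum_{uv\in E(U)}(x_u+x_v)+0,
\]
so $\rho_1^2\le |U|+2ab\le \frac52 s+O(1)$. This is far below $m$ when $c\le\frac12$, which contradicts (\ref{eq2}).

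The first, purely combinatorial stage is routine. The step I expect to be the main obstacle is making Step 1 exhaustive: every possible attachment pattern of $w$ to $K_{a,b}$ has to be either excluded by one of $M_1,\dots,M_8$ or charged enough triangles. That case analysis is where I would check eigenvalue sums of the small graphs most carefully. Note, however, that the final inequality $\rho_1^2\le |U|+2e(U)$ uses only $e(W)=0$ and $x_w\le x_{u^*}$. If it is valid as stated, it already contradicts $\rho_1>\sqrt m$ directly, so I would try that route first and fall back on the counting of Step 2 only if it fails.
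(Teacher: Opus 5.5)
There is a genuine gap, and it falls in the range $c\in[\frac25,\frac12]$, which is where you need the fallback. That fallback rests on $\rho_1^2\le|U|+2ab$, and this inequality is false. It comes from reading the last term of (\ref{eq1}) literally as $\sum_{w\in W}d_W(w)x_w$, which is a misprint. Expanding $\rho_1^2x_{u^*}=\sum_{u\in U}\sum_{v\in N(u)}x_v$, the walks $u^*uw$ contribute $\sum_{w\in W}d_U(w)x_w$; this is why the next line of (\ref{eq1}) carries $e(U,W)$. Here that term is strictly positive, because $W\neq\emptyset$ and $N(w)\subseteq U$. You observed that the bound uses only $e(W)=0$ and $x_w\le x_{u^*}$, yet contradicts $\rho_1>\sqrt m$ outright; that should have been a warning. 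Indeed $K_{2,n}$ with $u^*$ on the $2$-side has $e(W)=e(U)=0$ but $\rho_1^2=2n>|U|=n$. Even granting the inequality, $|U|+2ab\le\frac52s+2\approx\frac52cm$ is not below $m$ once $c\ge\frac25$, so it could not close that range anyway.

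The missing idea is one you mention but never use. The induced wheel $u^*\nabla H_0$ has spectrum $1+\sqrt5,0,0,1-\sqrt5,-2$, so $\sum_{i=2}^{4}\rho_i^2=(1-\sqrt5)^2=6-2\sqrt5>\frac32$. The second part of Claim \ref{clm-3.2} then forces $c\le\frac14$ at once. The paper uses this to dispose of all $c>\frac14$ and only counts triangles for $c\le\frac14$.

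Your counting stage also leans on structure you have not proved. Summing ``at least $d_U(w)-1$ triangles per $d_U(w)$ edges'' up to $(1-\frac32c-o(1))m$ requires $|W'''|=o(m)$ and only $O(1)$ one-sided vertices. Your Step 1 constraint (no induced $4$-cycle in $G^*[U]$ meets $N(w)$ in exactly one vertex) gives neither; for instance, when $a=2$ it still allows $N_U(w)=U'\cup\{y\}$. The paper supplies both facts with two more forbidden graphs:
\begin{itemize}
\item If $w$ sees $u_1\in U'$ and $u_2\in U''$ but misses $u_0\in U'$, then $u_2$ joined to the induced path $u_0u^*u_1w$ is $M_2=K_1\nabla P_4$. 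Hence $N_U(w)=U$ and $d_U(w)=|U|>\sqrt m$.
\item Two vertices with $N_U(w)=U'$, together with $u^*$ and two vertices from each side, induce $M_8$. Hence $|W'|,|W''|\le1$.
\end{itemize}
With these, $t(G^*)=ab(1+|W'''|)\ge2(|U|-2)(|W|-1)$ and $m\le2s+4+|U|(|W|-1)$. Together they give $t(G^*)\ge(2-4c-o(1))m>s$ for $c\le\frac14$.
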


\begin{proof}
Suppose, to the contrary, that $G^*[U_1]$ is not a star.
By Claim \ref{clm-3.5}, $U_0=\emptyset$, $W\neq\emptyset$, and $G^*[U]$ is complete bipartite.
Let $U'\cup U''$ be the bipartition of $G^*[U]$,
where $|U'|=a\geq2$ and $|U''|=b\geq2$.
Clearly, $G^*$ contains an induced 5-wheel,
whose third-largest eigenvalue equals $1\!-\!\sqrt{5}$.
By Claim \ref{clm-3.2}, if $c\in (\frac14,\frac12]$, then
$G^*$ admits no induced subgraph $H$ satisfying $\sum_{i=2}^{|H|-1}\rho_i^2(H)\geq\frac32.$
Since $(1\!-\!\sqrt{5})^2>\frac32$,
it follows necessarily that $c\in(0,\frac14]$.

As $e(W)=0$ by Claim \ref{clm-3.4},
we have $\mathrm{d}_U(w) \geq 1$ for each $w \in W.$
Let $W'=\{w\in W\mid N_U(w)\subseteq U'\},$
$W''=\{w\in W\mid N_U(w)\subseteq U''\}$ and $W'''=W\setminus(W'\cup W'').$

We first show that $N_U(w)=U'$ for all $w\in W'$ and $N_U(w)=U''$ for all $w\in W''$.
To see this, suppose for contradiction that there exist a vertex $w\in W'$ and two vertices $u_1,u_2\in U'$
such that $u_1\in N_U(w)$ but $u_2\notin N_U(w)$.
Then for any $u_3,u_4\in U''$, the vertex subset $\{u^*,u_1,u_2,u_3,u_4,w\}$ must induce a copy of $M_7$ (see Fig.\,\ref{fig-M5678}),
which contradicts Claim \ref{clm-3.2}.
Therefore, $N_U(w)=U'$ for all $w\in W'$.
An analogous argument shows that $N_U(w)=U''$ for all $w\in W''$.

We next establish that
$|W'|\leq 1$ and $|W''|\leq 1$.
Assume without loss of generality that $|W''|\leq |W'|$.
We argue by contradiction: suppose $|W'|\geq 2$.
Take $w_1,w_2\in W',$ $u_1,u_2\in U'$, and $u_3,u_4\in U''.$
Then, $\{u^*,u_1,u_2,u_3,u_4,w_1,w_2\}$ induces a copy of $M_8$ (see Fig.\,\ref{fig-M5678}), a contradiction.
Therefore, we conclude $|W'|\leq1$ and $|W''|\leq 1.$

We now prove that $N_U(w)=U$ for all $w\in W'''.$
We proceed by contradiction.
Suppose that there exist $w\in W'''$ and $u_0\in U$ such that $u_0w\notin E(G^*).$
Without loss of generality, assume that $u_0\in U'.$
Since $w\notin W'\cup W'',$
there exist $u_1\in U'$ and $u_2\in U''$ with $u_1,u_2\in N_U(w).$
Then, the join of $u_2$ and $u_0u^*u_1w$ is a copy of $M_2$ (see Fig.\,\ref{fig-M1234}), 
yielding a contradiction.
Therefore, $N_U(w)=U$ for all $w\in W'''.$

From the three foregoing observations together with \eqref{eq18}, it follows that
\begin{equation}\label{eq21}
t\big(G^*\big)=e\big(U\big)\big|\{u^*\}\cup W'''\big|
=ab\big(1\!+\!|W'''|\big)\geq 2\big(|U|\!-\!2\big)\big(|W|\!-\!1\big).
\end{equation}
Recall that $e(W)=0$.
In light of (\ref{eq19}) and (\ref{eq20}), we derive that
\begin{equation*}
m=e\big(U\cup\{u^*\}\big)+e\big(U,W\big)
\leq\frac32s+2+|U||W|\leq2s+4+|U|\big(|W|\!-\!1\big),
\end{equation*}
which yields $|W|\!-\!1\geq \frac{1}{|U|}(m\!-\!2s\!-\!4)$.
Recall $|U|\geq\rho_1>\sqrt{m}$ and $s=\big(c\pm o(1)\big)m$.
Combining inequality (\ref{eq21}) with the lower bound for $|W|-1$, we obtain
\begin{align*}
t(G^*)\geq\frac{2|U|\!-\!4}{|U|}\big(m\!-\!2s\!-\!4\big)\geq
\big(2\!-\!o(1)\big)\big(m\!-\!2s\!-\!4\big)=\big(2\!-\!4c\!-\!o(1)\big)m.
\end{align*}
Since $c\in(0,\frac14]$, we have $2-4c>2c$. It follows that $t(G^*)>s$, which contradicts the choice of $G^*$.
Therefore, $G^*[U_1]$ is a star, as claimed.
\end{proof}

Let $u_0$ denote the center of the star $G^*[U_{1}]$,
and let $u_1,u_2,\ldots,u_r$ denote its leaves, with $r=e(U_{1})\geq2\big(c-o(1)\big)\sqrt{m}$
by Claim \ref{clm-3.3}.

\begin{claim}\label{clm-3.7}
If $W\neq\emptyset$, then $N(w)=\{u_1,\ldots,u_r\}$ for all $w\in W$;
moreover, $U_0=\emptyset.$
\end{claim}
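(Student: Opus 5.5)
The plan is to pin down $N(w)$ for each $w\in W$ using only three facts: the structure already obtained, the forbidden induced subgraphs from Claim \ref{clm-3.2}, and the maximality of $\rho_1(G^*)$ within $\mathcal{G}$ combined with Lemma \ref{lem-2.3}. By Claims \ref{clm-3.4} and \ref{clm-3.6}, $e(W)=0$ and $G^*[U]$ is the star with center $u_0$ and leaves $u_1,\dots,u_r$, together with the isolated set $U_0$. Since there are no isolated vertices, every $w\in W$ satisfies $\emptyset\neq N(w)\subseteq U$.

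\emph{Step 1: $w$ meets every star edge.} Fix $w\in W$ and a neighbour $x\in U$ of $w$. Suppose some star edge $u_0u_i$ has $w\nsim u_0$ and $w\nsim u_i$, and also $x\notin\{u_0,u_i\}$ and $x\nsim u_0,u_i$. Then $\{w,x,u_0,u_i\}$ induces $2K_2$, which Claim \ref{clm-3.2} forbids. In particular, if $w$ has a neighbour in $U_0$, then $w$ must meet every edge $u_0u_i$. So $w\sim u_0$, or else $w$ is adjacent to all of $u_1,\dots,u_r$. When $x$ is itself a leaf $u_j$, I will compare with the leaves not adjacent to $w$. The induced subgraph on $\{u^*,u_0,u_j,u_k,w\}$ (with $w\sim u_j$, $w\nsim u_0,u_k$) is $K_4^-$ plus a pendant edge. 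I will check that its middle eigenvalues satisfy $\sum_{i=2}^{4}\rho_i^2\ge 2$, which would give the same conclusion. The outcome of this step is the dichotomy: $w\sim u_0$, or $\{u_1,\dots,u_r\}\subseteq N(w)$.

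\emph{Step 2: exclude $w\sim u_0$ and neighbours in $U_0$.} First suppose $w\sim u_0$. Since $r\ge 2(c-o(1))\sqrt m$ is large, there are many leaves. I will look at small induced configurations such as $\{u^*,u_0,u_i,u_j,w\}$ and $\{u^*,u_0,u_i,u_j,u,w\}$ with $u\in U_0$, according to whether $w$ is adjacent to $u_i$, $u_j$ and $u$. Each configuration should either be one of $M_1,\dots,M_8$, $2K_2$ or $K_4$, or be checked numerically to have $\sum_{i=2}^{|H|-1}\rho_i^2(H)\ge 2$. Where this eigenvalue bookkeeping fails, I will switch to Lemma \ref{lem-2.3}. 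Because $x_{u^*}\ge x_{u_0}$, the edges $u_0w$ for $w\in W$ can be moved to $u^*w$; this is legitimate because $w\notin N(u^*)$. This move keeps $m$ fixed and strictly increases $\rho_1$. Counting triangles before and after, the triangles $u_0u_iw$ are destroyed and the new ones are $u^*u_iw$ for $u_i\in N(w)$, so the count does not increase. The new graph therefore still lies in $\mathcal{G}$ with a larger spectral radius, contradicting the choice of $G^*$. By Step 1 we then get $N(w)\supseteq\{u_1,\dots,u_r\}$, and the same argument shows that $N(w)$ contains no vertex of $U_0$ (a configuration with such a vertex should again induce $2K_2$ or one of the $M_i$).

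\emph{Step 3: $U_0=\emptyset$.} Suppose $u\in U_0$ and $w\in W$. Then $u\nsim w$, and $\{u^*,u,u_i,w\}$ together with $u_0$ (or another leaf) should induce a forbidden graph. If that fails, I will use Lemma \ref{lem-2.3} with $x_{u_i}$ versus $x_u$ to transfer the edge $u^*u$ so as to increase $\rho_1$ while keeping the triangle count at most $s$.

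The main obstacle is Step 2. The eigenvalue checks for the relevant $5$- and $6$-vertex configurations must really reach the threshold $2$, not only $\frac32$, since $c$ may be small. In the switching argument, I must verify carefully that the triangle count does not increase and that the relevant Perron coordinates are ordered correctly.
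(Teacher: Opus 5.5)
Your proposal has a genuine gap in Step~2, and Step~1 relies on a numerical claim that is false.

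\emph{Step 2.} The triangle count for the switching $u_0w\to u^*w$ is wrong whenever $w$ has a neighbour in $U_0$. The triangles destroyed are $u_0wz$ with $z\in N(u_0)\cap N(w)$. These are only the leaves in $N(w)$, because no vertex of $U_0$ is adjacent to $u_0$. The triangles created are $u^*wz$ for \emph{every} $z\in N(w)\setminus\{u_0\}$, and each $u\in N(w)\cap U_0$ is adjacent to $u^*$. So $t$ goes up by $|N(w)\cap U_0|$. The switching therefore settles only the subcase $w\sim u_0$, $N(w)\cap U_0=\emptyset$, where it is a neat shortcut compared with the paper.

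What remains is exactly the hard subcase: $w\sim u_0$ and $w\sim u$ for some $u\in U_0$. The forbidden graphs $M_9$, $M_7$, $M_2$ and then $M_8$ reduce this to $N(w)=U$ and $U_0=\{u\}$. There, the configuration you list, $\{u^*,u_0,u_1,u_2,u,w\}$, has characteristic polynomial $\lambda^2(\lambda^4-10\lambda^2-8\lambda+4)$, and its middle eigenvalues have square sum about $1.91<2$. So for $c\le\frac14$ it is not excluded by Claim~\ref{clm-3.2}, and your switching does not apply either.

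The missing idea is the paper's Perron-vector comparison, which uses the maximality of $x_{u^*}$. If $N(w)=U$, then $w$ and $u^*$ are non-adjacent twins, so $x_w=x_{u^*}$. Together with $|U_0|\le 1$, comparing the eigen-equations at $u_0$ and at $u^*$ gives $x_{u_0}>x_{u^*}$, contradicting the choice of $u^*$.

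\emph{Step 1.} The diamond with a pendant edge at a degree-$2$ vertex, i.e.\ $\{u^*,u_0,u_j,u_k,w\}$ with $w\sim u_j$ only, has characteristic polynomial $(\lambda+1)(\lambda^4-\lambda^3-5\lambda^2+\lambda+2)$. Its eigenvalues are about $2.641,\,0.724,\,-0.589,\,-1,\,-1.776$, so $\sum_{i=2}^{4}\rho_i^2\approx 1.87<2$. Hence, for small $c$, you have not excluded $w\nsim u_0$ with $N(w)$ a proper subset of the leaves, and in particular not $d(w)=1$.

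The paper handles this in two moves. It first rules out $d(w)=1$ by switching $uw\to u^*w$; no triangle passes through $w$ before or after. It then applies $M_{12}$, which needs two leaf neighbours of $w$. Alternatively, adding a third leaf to your configuration reaches the threshold $2$. So this step can be repaired, but as written it does not go through.
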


\begin{proof}
Recall that $e(W)=0$.
Choose an arbitrary vertex $w\in W$. If $d(w)=1$, then its only neighbor $u$ belongs to $U$.
Now, we define a graph $G=G^*-\{uw\}+\{u^*w\}$. Then, $t(G)=t(G^*)$.
However, by Lemma \ref{lem-2.3}, $\rho(G)>\rho(G^*)$, which contradicts the choice of $G^*$.
Hence, we have $d(w)\geq2$ for any $w\in W$.
In the following, the proof is divided into three parts.

\begin{figure}[htbp]
\centering
\resizebox{0.98\textwidth}{!}{%
\begin{tikzpicture}[
    scale=1.0,
    x=1cm,y=1cm,
every node/.style={inner sep=1.2pt},
v/.style={circle,fill=black,draw=black,inner sep=0pt,minimum size=1.5mm},
lab/.style={font=\large},
sublab/.style={font=\large},
main/.style={draw=black,line width=0.3mm,line cap=round,line join=round}
]

\begin{scope}[shift={(0,0)}]
  \coordinate (T) at (0,1.55);
  \coordinate (W) at (0,-1.55);
  \coordinate (O) at (0,0);
  \coordinate (L) at (-1.45,0);
  \coordinate (R) at (1.2,0);
  \coordinate (U) at (1.9,0);

  \draw[main] (L)--(T)--(R);
  \draw[main] (T)--(U);
  \draw[main] (T)--(W);
  \draw[main] (L)--(R);
  \draw[main] (W)--(U);

  \node[v] at (T) {};
  \node[v] at (W) {};
  \node[v] at (O) {};
  \node[v] at (L) {};
  \node[v] at (R) {};
  \node[v] at (U) {};

  \node[lab,above=2pt] at (T) {$u^*$};
  \node[lab,below=2pt] at (W) {$w$};
  \node[lab,left=3pt]  at (L) {$u_1$};

  \node[lab] at (0.30,0.2) {$u_0$};
  \node[lab] at (1.35,0.2) {$u_2$};
  \node[lab] at (2.05,0.2) {$u$};

  \node[sublab] at (0,-2.5) {$M_9$};
\end{scope}

\begin{scope}[shift={(4.7,0)}]
  \coordinate (T) at (0,1.55);
  \coordinate (W) at (0,-1.55);
  \coordinate (O) at (0,0);
  \coordinate (L) at (-1.35,0);
  \coordinate (R) at (1.35,0);
  \coordinate (U) at (2.2,0);

  \draw[main] (L)--(T)--(R)--(W)--(L);
  \draw[main] (T)--(O);
  \draw[main] (L)--(R);
  \draw[main] (U)--(T);
  \draw[main] (W) to[out=25,in=-145] (U);

  \node[v] at (T) {};
  \node[v] at (W) {};
  \node[v] at (O) {};
  \node[v] at (L) {};
  \node[v] at (R) {};
  \node[v] at (U) {};

  \node[lab,above=2pt] at (T) {$u^*$};
  \node[lab,below=2pt] at (W) {$w$};
  \node[lab,left=-3pt,yshift=-6pt] at (L) {$u_1$};

  \node[lab] at (0.30,0.2) {$u_0$};
  \node[lab] at (1.5,0.2) {$u_2$};
  \node[lab] at (2.35,0.2) {$u$};

  \node[sublab] at (0,-2.5) {$M_{10}$};
\end{scope}

\begin{scope}[shift={(9.4,0)}]
  \coordinate (T) at (0,1.55);
  \coordinate (W) at (0,-1.55);
  \coordinate (O) at (0,0);
  \coordinate (L) at (-1.35,0);
  \coordinate (R) at (1.35,0);
  \coordinate (U) at (2.1,0);

  \draw[main] (L)--(T)--(R)--(W)--(L);
  \draw[main] (T)--(O);
  \draw[main] (L)--(R);
  \draw[main] (T)--(U);

  \node[v] at (T) {};
  \node[v] at (W) {};
  \node[v] at (O) {};
  \node[v] at (L) {};
  \node[v] at (R) {};
  \node[v] at (U) {};

  \node[lab,above=2pt] at (T) {$u^*$};
  \node[lab,below=2pt] at (W) {$w$};
  \node[lab,left=-3pt,yshift=-6pt] at (L) {$u_1$};

  \node[lab] at (0.30,0.2) {$u_0$};
  \node[lab] at (1.5,0.2) {$u_2$};
  \node[lab,right=1pt] at (U) {$u$};

  \node[sublab] at (0,-2.5) {$M_{11}$};
\end{scope}

\begin{scope}[shift={(14.3,0)}]
  \coordinate (T)  at (0,1.55);     
  \coordinate (W)  at (-0.1,-1.55); 
  \coordinate (O)  at (0,0);        
  \coordinate (L)  at (-1.35,0);     
  \coordinate (R)  at (1.35,0);     
  \coordinate (UR) at (2.1,0.95);   

  \draw[main] (L)--(T);
  \draw[main] (T)--(O);
  \draw[main] (T)--(R);
  \draw[main] (T)--(UR);
  \draw[main] (L)--(O);
  \draw[main] (O)--(R);
  \draw[main] (O)--(UR);
  \draw[main] (L)--(W);
  \draw[main] (W)--(R);

  \node[v] at (T) {};
  \node[v] at (W) {};
  \node[v] at (O) {};
  \node[v] at (L) {};
  \node[v] at (R) {};
  \node[v] at (UR) {};

  \node[lab,above=2pt] at (T) {$u^*$};
  \node[lab,below=2pt] at (W) {$w$};
  \node[lab,left=3pt] at (L) {$u_1$};
  \node[lab,below=5pt] at (O) {$u_0$};
  \node[lab,right=3pt] at (R) {$u_2$};
  \node[lab,right=3pt] at (UR) {$u_3$};

  \node[sublab] at (0,-2.5) {$M_{12}$};
\end{scope}

\end{tikzpicture}%
}
\caption{The forbidden induced subgraphs $M_9$, $M_{10}$, $M_{11}$, and $M_{12}$.}
\label{fig-M9-M12}
\end{figure}

\begin{table}[htbp]
\centering
\caption{The intermediate eigenvalues of $M_9$, $M_{10}$, $M_{11}$, and $M_{12}$.}
\label{tab-M9101112}
\begin{tabular*}{\textwidth}{@{\extracolsep{\fill}}ccccc@{}}
\toprule
& $M_9$ & $M_{10}$ & $M_{11}$ & $M_{12}$ \\
\midrule
$\rho_2$ & 0.8061 & 0.7020 & 0.6648 & 0.8347 \\
$\rho_3$ & 0.0000 & 0.0000 & 0.0000 & 0.0000 \\
$\rho_4$ & 0.0000 & 0.0000 & 0.0000 & -0.6272 \\
$\rho_5$ & -1.7093 & -1.2855 & -1.3684 & -1.0000 \\
\bottomrule
\end{tabular*}
\end{table}

To begin with, we show that $N(w)=U$ for all $w\in W$ satisfying $w\sim u_0$.
Suppose to the contrary that there exists $w\in W$ such that $w\sim u_0$ yet $N(w)\subsetneq U$.
If $w$ has no neighbors in $\{u_1,\ldots,u_r\}$,
then $w$ must have a neighbor $u\in U_0$. Consequently,
the vertex subset $\{u^*,u_0,u_1,u_2,u,w\}$ induces a copy of $M_9$ (see Fig.\,\ref{fig-M9-M12}), yielding a contradiction.
If $w$ is adjacent to all vertices in $\{u_1,\ldots,u_r\}$, the proper inclusion $N(w)\subsetneq U$
guarantees the existence of some $u\in U_0$ with $w\nsim u$.
In this case, $\{u^*,u_0,u_1,u_2,u,w\}$ induces a copy of $M_7$ (see Fig.\,\ref{fig-M5678}), another contradiction.
The only remaining possibility is that $w$ has both a neighbor $u_1$ and a non-neighbor $u_2$ in $\{u_1,\ldots,u_r\}$.
Under this setup, the join of $u_0$ and $u_2u^*u_1w$ induces a copy of $M_2$ (see Fig.\,\ref{fig-M1234}),
which again gives a contradiction.
Therefore, we conclude that $N(w)=U$ for all $w\in W$ satisfying $w\sim u_0$.

We next prove that $N(w)=U$ for every $w\in W$ with $N(w)\cap U_0\neq \emptyset$.
Suppose, for contradiction, that some vertex $w \in W$ satisfies $N(w)\cap U_0\neq \emptyset$,
i.e., $w$ has a neighbor $u \in U_0$.
Based on our preceding conclusion,
if $w \sim u_0$, then $N(w)=U$ holds immediately.
We may thus assume $w \nsim u_0$.
We claim that $w$ is adjacent to each vertex in $\{u_1,\dots,u_r\}$,
Indeed, if $w\nsim u_i$ for some $i\in\{1,\ldots,r\}$, then $\{u_0u_i, uw\}$ induces a copy of $2K_2$, a contradiction.
However, under this adjacency configuration,
$\{u^*, u_0, u_1, u_2, u, w\}$ induces a copy of $M_{10}$ (see Fig.\,\ref{fig-M9-M12}),
again a contradiction.
Hence, we conclude that $N(w)=U$ holds for all $w\in W$ with $N(w)\cap U_0\neq \emptyset$.

Finally, we address the remaining case:
assume there exists some $w \in W$ with $N(w)\subseteq\{u_1,\dots,u_r\}$.
Under this assumption, we shall show $U_0=\emptyset$ and $N(w)=\{u_1,\dots,u_r\}$.
First, we show $U_0=\emptyset$. Since $d_U(w)=d(w)\geq2$,
we can select two vertices $u_1,u_2\in N_U(w)$.
If $U_0 \neq \emptyset$, choose any $u\in U_0$.
Then, the vertex subset $\{u^*, u_0, u_1, u_2, u, w\}$ induces a copy of $M_{11}$ (see Fig.\,\ref{fig-M9-M12}),
which gives a contradiction.
Hence, $U_0=\emptyset$.
Next, we prove $N(w)=\{u_1,\dots,u_r\}$.
Suppose that
there exists a vertex $u_3\in\{u_1,\dots,u_r\}$ with $w \nsim u_3$.
In this case, $\{u^*,u_0,u_1,u_2,u_3,w\}$ induces a copy of $M_{12}$ (see Fig.\,\ref{fig-M9-M12}),
another contradiction. Hence, $N(w)=\{u_1,\dots,u_r\}$.

From the three preceding conclusions,
every $w\in W$ satisfies either $N(w)=U$ or
$N(w)=\{u_1,\ldots,u_r\}$.
Suppose both cases are realized simultaneously,
so there exist $w_1,w_2\in W$ such
that $N(w_1)=U$ and $N(w_2)=\{u_1,\ldots,u_r\}.$
Moreover, $U_0=\emptyset$. As
$w_1\in N(u_0)\setminus N(u^*)$, we obtain $N[u^*]\subsetneq N[u_0]$,
which implies $x_{u^*}<x_{u_0}$, a contradiction.
Hence, the two possibilities cannot hold simultaneously.

Suppose now that $N(w)=U$ for all $w\in W$.
Then, $N(w)=N(u^*)$ and $x_w=x_{u^*}$ for every $w\in W$.
Furthermore, $\rho_1x_u=\sum_{v\in N(u)}x_v=(1+|W|)x_{u^*}$ for every $u\in U_0$.
In this case, if there exist two vertices $u,u'\in U_0,$
then $\{u^*,u_0,u_1,u_2,u,u',w\}$ induces a copy of $M_8$ (see Fig.\,\ref{fig-M5678}),
leading to a contradiction.
Hence, $|U_0|\leq 1$.
Thus,
$$\rho_1x_{u^*}=x_{u_0}+\sum_{u\in U_0}x_u
+\sum_{i=1}^{r}x_{u_i}\leq x_{u_0}+\frac{1\!+\!|W|}{\rho_1}x_{u^*}+\sum_{i=1}^{r}x_{u_i}.$$
On the other hand, we have
$\rho_1x_{u_0}=x_{u^*}+|W|x_{u^*}+\sum_{i=1}^{r}x_{u_i}.$
Under the assumption that $W\neq \emptyset$,
we readily get $x_{u_0}>x_{u^*}$, which gives a contradiction.
Therefore, we conclude $N(w)=\{u_1,\ldots,u_r\}$ for all $w\in W$,
and this forces $U_0=\emptyset$.
\end{proof}

\begin{claim}\label{clm-3.8}
$G^*\cong S_{m,s}^{+}$.
\end{claim}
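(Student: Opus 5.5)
By Claims \ref{clm-3.4}, \ref{clm-3.6} and \ref{clm-3.7}, the structure of $G^*$ is now almost completely determined, and I would finish by a direct computation of $\rho_1$ on the few remaining configurations. Write $A=\{u_1,\ldots,u_r\}$. I would split into the cases $W\neq\emptyset$ and $W=\emptyset$.

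\emph{Case $W\neq\emptyset$.} By Claim \ref{clm-3.7}, $U_0=\emptyset$, $e(W)=0$, and $N(w)=A$ for every $w\in W$. Since $u^*\sim u_0$ and both $u^*$ and $u_0$ are adjacent to all of $A$, put $B=\{u^*,u_0\}\cup W$ and $q=|B|=|W|+2$. Then $G^*$ is $K_{r,q}$ with the single edge $u^*u_0$ added inside $B$. Consequently $m=rq+1$, and the triangles are exactly $u^*u_0u_i$, so $t(G^*)=r$. To compute $\rho_1$, I would use the equitable partition $\{u^*,u_0\}$, $W$, $A$ with Perron entries $x$, $y$, $z$:
\[
\rho_1 x=x+rz,\qquad \rho_1 y=rz,\qquad \rho_1 z=2x+(q-2)y .
\]
Substituting $x=rz/(\rho_1-1)$ and $y=rz/\rho_1$ gives
\[
\rho_1^2=m-1+\frac{2r}{\rho_1-1}.
\]
The spectral hypothesis $\rho_1^2\geq m-1+\frac{2s}{\rho_1-1}$ then forces $r\geq s$, while $t(G^*)=r\leq s$. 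Hence $s=r$ is an integer, $q=(m-1)/s$, and $G^*\cong S^+_{m,s}$.

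\emph{Case $W=\emptyset$.} Here $G^*$ is $u^*$ joined to $\{u_0\}\cup A\cup U_0$, with $u_0$ also joined to $A$. Let $k=|U_0|$. Then $m=2r+1+k$ and $t(G^*)=r$. If $k=0$, this is exactly the previous graph with $q=2$, so the same identity gives $r=s=\frac{m-1}{2}$ and $G^*\cong S^+_{m,s}$. If $k\geq1$, I would show that $\rho_1^2<m-1+\frac{2r}{\rho_1-1}\leq m-1+\frac{2s}{\rho_1-1}$, contradicting $G^*\in\mathcal G$. For this I would use the quotient with entries $a,b,z,p$ on $u^*$, $u_0$, $A$, $U_0$:
\[
\rho_1 a=b+rz+kp,\qquad \rho_1 b=a+rz,\qquad \rho_1 z=a+b,\qquad \rho_1 p=a .
\]
I would eliminate variables to obtain a quartic in $\rho_1$, evaluate the function $F(\lambda)=\lambda^2-(m-1)-\frac{2r}{\lambda-1}$ on it, and show $F(\rho_1)<0$. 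The intuition is that the $k$ pendant edges contribute $k$ to $m$ but only about $k/\rho_1^{2}$ relative weight to $\rho_1^2$.

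Alternatively, $k\geq1$ might be excluded more cheaply. Since $r\geq 2(c-o(1))\sqrt m$ and $m=2r+1+k$, we get $k=m-1-2r$, which is large whenever $s<\frac{m-1}{2}$. One could try moving pendant edges via Lemma \ref{lem-2.3}, but I would fall back on the explicit quotient computation.

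I expect the main obstacle to be this $k\geq1$ inequality. It must hold uniformly over $r\leq s$ and all $k\geq1$, with $\rho_1$ only known to lie in $(\sqrt m,\sqrt m+\frac45)$ by \eqref{eq2} and \eqref{eq5}, so the sign of $F(\rho_1)$ has to be extracted carefully from the characteristic polynomial rather than from asymptotics alone.
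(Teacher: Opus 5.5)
Your proposal is correct. In the cases $W\neq\emptyset$, and $W=\emptyset$ with $U_0=\emptyset$, it coincides with the paper. The quotient gives $\rho_1^2=m-1+\frac{2r}{\rho_1-1}$; then $t(G^*)=r\le s$ and the spectral hypothesis gives $r\ge s$.

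You depart from the paper only in the subcase $W=\emptyset$, $k=|U_0|\ge 1$. The paper argues combinatorially. Two vertices of $U_0$ together with $u^*,u_0,u_1,u_2$ would induce $M_5$, so $k=1$. Then $r=\frac{m-2}{2}$, which forces $c=\frac12$. Finally $\{u^*,u_0,u_1,u_2,u\}$ induces a dart with $\rho_4^2>\frac32$, contradicting the sharpened part of Claim~\ref{clm-3.2} (valid only for $c>\frac14$). You instead show directly that this graph violates the spectral hypothesis.

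The step you flagged as the main obstacle is actually immediate. Your quotient equations give
$b(\rho_1^2-r)=a(\rho_1+r)$ and $a(\rho_1^2-r-k)=b(\rho_1+r)$. Multiplying and cancelling $ab>0$ shows that $\rho_1$ is a root of $Q(\lambda)=\lambda^4-m\lambda^2-2r\lambda+rk$. With $m=2r+k+1$, a direct expansion gives the identity
\[
Q(\lambda)=(\lambda^2-1)F(\lambda)+k(r-1),\qquad F(\lambda)=\lambda^2-(m-1)-\frac{2r}{\lambda-1}.
\]
Hence $F(\rho_1)=-\frac{k(r-1)}{\rho_1^2-1}<0$ whenever $k\ge1$ and $r\ge2$; the latter holds since $r\ge 2(c-o(1))\sqrt m$ by Claim~\ref{clm-3.3}. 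Combined with $r=t(G^*)\le s$, this gives $\rho_1^2<m-1+\frac{2r}{\rho_1-1}\le m-1+\frac{2s}{\rho_1-1}$, contradicting $G^*\in\mathcal G$.

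This is exact and uniform in $k$. It needs no asymptotics and no use of the window $(\sqrt m,\sqrt m+\frac45)$, so your worry about extracting the sign carefully is unfounded. The Lemma~\ref{lem-2.3} alternative is unnecessary.

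Comparing the two routes:
\begin{itemize}
\item Yours is self-contained. It avoids the numerically computed eigenvalues of $M_5$ and the dart, the case split on $c$, and the second part of Claim~\ref{clm-3.2}. It also shows that attaching any pendant edges at $u^*$ strictly breaks the spectral condition; note the identity gives $F(\rho_1)=0$ precisely when $k=0$ or $r=1$, the two ways this graph is itself an $S^+_{m,r}$.
\item The paper's route keeps this last step within the forbidden-induced-subgraph framework used throughout Section~\ref{sec-3}.
\end{itemize}
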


\begin{proof}
We first demonstrate that $U_0=\emptyset$.
By Claim \ref{clm-3.7}, the desired conclusion holds provided $W\neq\emptyset$.
We hence suppose $W=\emptyset$ while $U_0\neq\emptyset$. 
Since $G^*[U_1]\cong K_{1,r}$, we obtain $m=2r+1+|U_0|$ and $t(G^*)=r$.
If there exist two distinct vertices $u,u'\in U_0$, then 
$\{u^*,u_0,u_1,u_2,u,u'\}$ induced a copy of $M_5$ with dominating vertex $u^*$ (see Fig.\,\ref{fig-M5678}),
yielding a contradiction.
Thus, $|U_0|=1$, which gives $t(G^*)=r=\frac12(m-2).$
Recall that $t(G^*)\leq s=\big(c\pm o(1)\big)m$, where $c\in(0,\frac12]$ is a constant. 
This forces $c=\frac12$.
Write $U_0=\{u\}$. 
Then, $\{u^*,u_0,u_1,u_2,u\}$ induces a dart graph $H=K_1\nabla(P_3\cup K_1)$,
whose fourth-largest eigenvalue satisfies $\rho^2_4(H)>(-1.27)^2>\frac32$.
Since $c=\frac12$, this contradicts Claim \ref{clm-3.2}.
Hence, we have $U_0=\emptyset$.

Combining Claims \ref{clm-3.6} and \ref{clm-3.7}, 
we conclude that the vertex set of $G^*$ admits a partition:
$\{u_1,\ldots,u_r\}$ and $\{u^*,u_0\} \cup W.$
All possible edges between the two parts are present, and moreover, 
$u^*u_0$ is the only edge with both endpoints in the same part. 
Hence, $G^* \cong K_{r,|W\cup\{u_0,u^*\}|} +\{u_0 u^*\},$
i.e., $G^*$ is isomorphic to $S_{m,r}^{+}.$  
A straightforward calculation shows that $\rho_1(S_{m,r}^{+})$ is the largest real root of the equation
\begin{equation*}
x^3-x^2-(m-1)x+(m-1-2r)=0.
\end{equation*}
For $x>1$, the above equation is algebraically equivalent to the following form:
\begin{equation*}
x^2=m-1+\frac{2r}{x-1}.
\end{equation*}

It remains to verify that $r=s$.
From the structure of $S_{m,r}^{+}$, we immediately have $t(G^*)=r$.
Together with the fact that $t(G^*)\leq s$,
this yields $r\leq s$.
On the other hand, the definition of $G^*$ indicates $\rho_1^2\geq m-1+\frac{2s}{\rho_1-1}.$
Therefore, we conclude that $r=s$, completing the proof.
\end{proof}

\end{document}